%
%
%
\documentclass{amsproc}

\newtheorem{theorem}{Theorem}[section]
\newtheorem{lemma}[theorem]{Lemma}
\newtheorem{col}{Corollary}[section]
\theoremstyle{definition}
\newtheorem{definition}[theorem]{Definition}

\newtheorem{assump}{Assumption}
\theoremstyle{remark}
\newtheorem{remark}[theorem]{Remark}

\numberwithin{equation}{section}

\newcommand\bPW{{ {\bf PW}}}

\newcommand\PWoL{{PW_{\negthinspace \omega}(\sqrt{L})}}

\def\PWoL{{{\bf PW}_{\negthinspace \omega}(\sqrt{L})}}
\newcommand\FSL{{F(\sqrt{L})}}
\newcommand\SLB{{(\sqrt{L})}}

\def\EB{{\bf E}}
\def\FB{{\bf F}}
\def\AB{{\bf A}}
\def\HB{{\bf H}}



\begin{document}

\title[Sobolev, Besov and Paley-Wiener vectors ]{Sobolev, Besov and Paley-Wiener vectors in Banach and Hilbert spaces}

\author{Isaac Z. Pesenson}
\address{Department of Mathematics, Temple University,
 Philadelphia,
PA 19122}

\email{pesenson@temple.edu}
\thanks{ I would like to thank Dr. Meyer
 Pesenson for a number of  useful suggestions.}
\subjclass[2000]{ Primary 43A85, 41A17.}

\dedicatory{Dedicated to 100th Birthday of my teacher  S.G. Krein.}

\keywords{Groups and semigroups of operators, Sobolev, Besov, and Paley-Wiener vectors, Interpolation and Approximation spaces}

\begin{abstract}

We consider  Banach spaces equipped with a set of strongly continuous bounded semigroups satisfying certain conditions. Using these semigroups we introduce an analog of  a modulus of continuity and define analogs of Besov norms.  A generalization of a classical interpolation theorem is proven in which the  role of Sobolev spaces is played by  subspaces defined in terms of infinitesimal operators of these semigroups.  We show that our assumptions about a given set of semigroups are satisfied in the case of a strongly continuous bounded representation of a Lie group. In the case of a unitary representation in a Hilbert space we consider an analog of the Laplace operator and use it to define Paley-Wiener vectors. It allows us  to develop a generalization  of the Shannon-type sampling in Paley-Wiener subspaces and to construct Paley-Wiener nearly Parseval frames in the entire Hilbert space. It is shown  that Besov spaces defined previously in terms of the modulus of continuity can be described in terms of approximation by Paley-Wiener vectors and also in terms of the frame coefficients.  Throughout the paper  we extensively use  theory of interpolation and approximation spaces.  The paper ends with applications of our results  to function spaces on homogeneous manifolds. 
\end{abstract}

\maketitle

\section{Introduction and Main Results}
I am honored to have had Selim Grigorievich Krein as my academic advisor and co-author.
Without his steady, strong interest in my research and his support I would not have been able to have the
privilege of becoming a mathematician. His inspiring encouragement  shaped not only my professional trajectory but my life in general. 

\bigskip

The first five sections of this paper are devoted to Sobolev and Besov subspaces  and to relevant moduli of continuity in Banach spaces. This theory is  rooted in my results obtained in 70s:  \cite {Pes78}-\cite{Pes2}, \cite{Pes4}, \cite{KP}. It is a far going generalization of the one-dimensional  theory by J. Lions \cite{Li} and J. Lions-J. Peetre \cite{L-P} which I learned from the  nice book by P. Butzer and H. Berens \cite{BB}. The Paley-Wiener vectors and corresponding approximation theory in abstract Hilbert spaces were introduced in 80s in my papers \cite{Pes3}, \cite{Pes5}, \cite{Pes6}, \cite{KP} (see sections \ref{Hilbert} and \ref{Bes} below). In papers \cite{Pes00}-\cite{Pes11} I used the notion of Paley-Wiener vectors  to  prove Shannon-type sampling theorems on manifolds and in general Hilbert spaces. The construction of frames and description of  Besov subspaces on manifolds and in Hilbert spaces in terms of  frame coefficients is rather recent development and can be found  in   \cite{gp}, \cite{Pes15}. Some of these results were summarized in  \cite{FFP}.

\bigskip

We consider a Banach space $\mathbf{E}$ and operators $D_{1}, D_{2},...,D_{d}$ which generate strongly continuous uniformly bounded semigroups $T_{1}(t), T_{2}(t),...,T_{d}(t), \>\> \|T(t)\|\leq 1, \>\>t\geq 0.$  An analog of a Sobolev space is introduced as the space $\mathbf{E}^{r}$ of vectors in $\mathbf{E} $ for which the following norm is finite
$$
|||f|||_{\mathbf{E}^{r}}=\|f\|_{\mathbf{E}}+\sum_{k=1}^{r}\sum_{1\leq j_{1},  ...j_{k}\leq d}\|D_{j_{1}}...D_{j_{k}}f\|_{\mathbf{E}},
$$
 where $r\in \mathbb{N},\>\> f\in \mathbf{E}.$
  By using the closed graph theorem and
the fact that each $D_{i}$ is a closed operator in $\mathbf{E}$,\ one can show that this norm
 is equivalent to the norm
\begin{equation}\label{Sob}
\|f\|_{r}=\|f\|_{\mathbf{E}}+\sum_{1\leq i_{1},..., i_{r}\leq
d}\|D_{i_{1}}...D_{i_{k}}f\|_{\mathbf{E}},\>\> ~ r\in \mathbb{N}.
\end{equation}
 The mixed modulus of continuity is introduced as
\begin{equation}
\Omega^{r}( s, f)= 
$$
$$
\sum_{1\leq j_{1},...,j_{r}\leq
d}\sup_{0\leq\tau_{j_{1}}\leq s}...\sup_{0\leq\tau_{j_{r}}\leq
s}\|
\left(T_{j_{1}}(\tau_{j_{1}})-I\right)...\left(T_{j_{r}}(\tau_{j_{r}})-I\right)f\|_{\mathbf{E}},\label{M}
\end{equation}
where $f\in \mathbf{E},\ r\in \mathbb{N},  $ and $I$ is the
identity operator in $\mathbf{E}.$   Let $\mathcal{D}(D_{i})$ be the domain of the operator $D_{i}$.
For every $f\in \mathbf{E}$ we introduce a vector-valued function
$$
Tf: \mathbb{R}^{d}\longmapsto \mathbf{E}
$$
defined as
$$
Tf(t_{1}, t_{2},  ..., t_{d})=T_{1}(t_{1}) T_{2}(t_{2})  ... T_{d}(t_{d})f.
$$
\begin{assump}
We assume that the following properties hold.
\begin{enumerate}

\item There exists a set $\mathcal{G}\subset \mathbf{H}^{1}= \bigcap_{i=1}^{d}\mathcal{D}(D_{i})$ which  is dense in $\mathbf{H}$ and  invariant with respect to all $T_{i}(t), \>\>1\leq i\leq d,\>\>t\geq 0.$ 

\item For every $1\leq i\leq d,$ every $f\in \mathcal{G}$ and all $\mathbf{t}=(t_{1}, ..., t_{d})$ in the standard open unit ball $U$ in $\mathbb{R}^{d}$

\begin{equation}\label{assumpt-1}
D_{i}Tf(t_{1}, ..., t_{d})=\sum_{k=1}^{d}\zeta^{k}_{i}(\mathbf{t})\left(\partial_{k} Tf\right)(t_{1}, ..., t_{d}),
\end{equation}
\textit{where $\zeta^{k}_{i}(\mathbf{t})$ belong to $C^{\infty}(U),\>\partial_{k}=\frac{\partial}{\partial t_{k}}.$}

\end{enumerate}

\end{assump}

\begin{remark}

 Since $\mathbf{E}^{1}$ is invariant with respect to all bounded operators $T_{i}(t_{i}),\>\>1\leq i\leq d,$ we obtain for every  $f\in \mathbf{E}^{1}$ 
 \begin{equation}\label{deriv}
 T_{1}(t_{1})T_{2}(t_{2}) ... D_{k}T_{k}(t_{k}) ... T_{d}(t_{d}) f=
 $$
 $$
  T_{1}(t_{1})T_{2}(t_{2}) ... T_{k-1}(t_{k-1})\lim_{s\rightarrow 0}\frac{1}{s}\left(T_{k}(t_{k}+s)-T_{k}(t_{k})\right)T_{k+1}(t_{k+1}) ... T_{d}(t_{d}) f=
  $$
  $$
  \lim_{s\rightarrow 0}\frac{1}{s}T_{1}(t_{1})T_{2}(t_{2}) ... T_{k-1}(t_{k-1})\left(T_{k}(t_{k}+s)-T_{k}(t_{k})\right)T_{k+1}(t_{k+1}) ... T_{d}(t_{d}) f=
$$
  $$
  \frac{\partial}{\partial t_{k}}T_{1}(t_{1})T_{2}(t_{2})  ... T_{d}(t_{d}) f=\left( \frac{\partial}{\partial t_{k}}Tf\right)(t_{1}, ..., t_{d}).
 \end{equation}
Thus the formula (\ref{assumpt-1}) can be rewritten as 
\begin{equation}\label{assumpt}
D_{i}T_{1}(t_{1})T_{2}(t_{2}) ... T_{d}(t_{d})f=\sum_{k=1}^{d}\zeta^{k}_{i}(\mathbf{t})T_{1}(t_{1})T_{2}(t_{2}) ... D_{k}T_{k}(t_{k}) ... T_{d}(t_{d}) f, 
\end{equation}
where $\zeta^{k}_{i}(\mathbf{t})$ belong to $C^{\infty}(U), $ $\mathbf{t}=(t_{1}, ..., t_{d})\in U$.

\end{remark}

\begin{remark}

When semigroups commute with each other  $T_{i}(t_{i})T_{j}(t_{j})=T_{j}(t_{j})T_{i}(t_{i}),\>\>1\leq i,j\leq d,\>\>t_{i}, t_{j} \geq 0,$ then $\zeta^{k}_{i}=\delta_{i}^{k}$.

\end{remark}

It is well known that most of remarkable properties of the so-called Besov functional spaces follow from the fact that they are interpolation spaces (see section \ref{Interp} below) between two Sobolev spaces \cite{BB}, \cite{KPS}.  For this reason we define Besov spaces by the formula
\begin{equation}\label{interp}
 \mathbf{E}^{\alpha, q}=\left(\mathbf{E},\>\mathbf{E}^{r}\right)^{K}_{\alpha/r, \>q},             \ 0<\alpha<r\in \mathbb{N},\ 1\leq p, \>q\leq \infty,
\end{equation}
where $K$ is the so-called Peetre's interpolation functor (see section \ref{Interp} below).
The main result is the following.

\begin{theorem}\label{Main} If Assumption 1 is satisfied  then the following holds true.

\begin{enumerate}

\item The functionals $\Omega_{r}(s, f)$ and $K(s^{r}, f, \mathbf{E}, \mathbf{E}^{r})$  are equivalent. Namely, there exist constants $c>0,\>C>0$, such that for all $f\in \mathbf{E},\>\>s\geq 0$ 
\begin{equation}\label{main-ineq}
c \>\Omega^{r}(s, f)\leq K(s^{r}, f, \mathbf{E}, \mathbf{E}^{r})\leq C \left( \Omega^{r}(s, f)+\min(s^{r}, 1)\|f\| \right).
\end{equation}
\item 
 The norm of the Besov space $\mathbf{E}^{\alpha, q}=\left(\mathbf{E},\>\mathbf{E}^{r}\right)^{K}_{\alpha/r,q},             \ 0<\alpha<r\in \mathbb{N},\ 1\leq p, q\leq \infty,$ is equivalent to the norm 
\begin{equation}\label{Bnorm1}
\|f\|_{\mathbf{E}}+\left(\int_{0}^{\infty}(s^{-\alpha}\Omega^{r}(s,
f))^{q} \frac{ds}{s}\right)^{1/q} , 1\leq q<\infty,
\end{equation}
with the usual modifications for $q=\infty$.
\item The following isomorphism holds true
$
\left(\mathbf{E},\>\mathbf{E}^{r}\right)^{K}_{\alpha/r,q}=\left(\mathbf{E}^{k_{1}},\>\mathbf{E}^{k_{2}}\right)^{K}_{(\alpha-k_{1})/(k_{2}-k_{1}),q},    
$
where $0\leq k_{1}<\alpha<k_{2}\leq r\in \mathbb{N},\ 1\leq  q\leq \infty.$

\item  If $\alpha$ is not integer then the norm
(\ref{Bnorm1}) is equivalent to the norm

\begin{equation}\label{Bnorm3}
\|f\|_{\mathbf{E}^{[\alpha]}}+\sum_{1\leq j_{1},...,j_{[\alpha] }\leq d}
\left(\int_{0}^{\infty}\left(s^{[\alpha]-\alpha}\Omega^{1}
(s,D_{j_{1}}...D_{j_{[\alpha]}}f)\right)^{q}\frac{ds}{s}\right)^{1/q}
\end{equation}
where $[\alpha]$ is the integer part of $\alpha$.
\item   If
$\alpha=k\in \mathbb{N}$ is an integer then the norm
(\ref{Bnorm1}) is equivalent to the norm (Zygmund condition)
\begin{equation}\label{Bnorm4}
\|f\|_{\mathbf{E}^{k-1}}+ \sum_{1\leq j_{1}, ... ,j_{k-1}\leq d }
\left(\int_{0}^{\infty}\left(s^{-1}\Omega^{2}(s,
D_{j_{1}}...D_{j_{k-1}}f)\right)
 ^{q}\frac{ds}{s}\right)^{1/q}.
\end{equation}

\end{enumerate}

\end{theorem}

Next we make another assumption. 
\begin{assump} In addition to Assumption 1 we assume that $\mathbf{E}=\mathbf{H}$ is a Hilbert space and  the following properties  hold.
\begin{enumerate}

\item The operator
 $
L=D_{1}^{2}+... +D_{d}^{2}
$
is a non-negative self-adjoint operator in $\mathbf{H}$.

\item The domain  $\mathcal{D}(L^{k/2}),\>\>k\in \mathbb{N}, $ of the non-negative  square root $L^{k/2}$  coincides  with the space $\mathbf{H}^{k}$ and the  norms (\ref{Sob}) and $\|f\|_{\mathbf{H}}+\|L^{k/2}f\|_{\mathbf{H}}$ are equivalent.

\end{enumerate}

\end{assump}

This assumption allows us to introduce notion of Paley-Wiener vectors (bandlimited vectors) (Definition \ref{PWvector}) and to prove an abstract version of the Paley-Wiener Theorem (Theorem \ref{PWproprties}).

To formulate  an analog of the Shannon-type sampling in abstract Paley-Wiener spaces 
(Theorem \ref{PWSs}) we consider the following assumptions.

\begin{assump}

We assume that  there exist   $C, c>0$ and  $m_{0}\geq 0$ such that for any $0<\rho<1$ there exists a        set of functionals  $\mathcal{A}^{(\rho)}=\left\{\mathcal{A}_{ k}^{(\rho)}\right\}_{k\in \mathcal{K}},$  defined on $ \mathbf{H}^{m_{0}}$, for which
\begin{equation}\label{A}
c\sum_{k}\left|\mathcal{A}_{k}^{(\rho)}(f)\right|^{2}  \leq \|f\|^{2}_{\mathbf{H}}\leq
C\left(\sum_{k\in \mathcal{K}}\left|\mathcal{A}_{ k}^{(\rho)}(f)\right|^{2}+\rho^{2m}\|L^{m/2}f\|_{\mathbf{H}}^{2}\right),
\end{equation}
 for all $ f \in  \mathbf{H}^{m}, \>\>m> m_{0}.$
      
\end{assump}

This Abstract Sampling Theorem \ref{PWSs} is used to construct bandlimited frames in $\mathbf{E}$ (Theorem \ref{frameH}).  
By exploring relations between Interpolation and Approximation spaces we   develop approximation theory by Paley-Wiener vectors in Theorems \ref{approx} and \ref{projections}. 
Finally, in Theorem \ref{framecoef} we obtain description of Besov spaces in terms of frame coefficients.

In section \ref{Lie}  we show that our Assumptions 1 are satisfied for strongly continuous representations of Lie groups in Banach spaces and Assumptions 2 are satisfied for unitary representations of Lie groups in Hilbert spaces. Concerning  Assumptions 3 we  note that our results in \cite{Pes00}-\cite{Pes09a} imply that at least when one is considering a so-called regular or quasi-regular representation of a Lie group in a function space on a homogeneous manifold  $M$, some specific sets of Dirac measures (or even more general functionals \cite{Pes04b})  "uniformly" distributed over $M$ can  serve as functionals $\mathcal{A}^{(\rho)}=\left\{\mathcal{A}_{ k}^{(\rho)}\right\}_{k\in \mathcal{K}_{\rho}},$ where $\rho\in \mathbb{R}_{+}$ represents a specific "spacing" of these Dirac measures.

It should be noted that due to importance of the theory of function spaces  there is constant interest  in extending classical constructions and results from Euclidean to non-Euclidean settings. It is impossible to list even the most significant  publications on this subject which appeared during  the last years. Here we mention just a very few   papers which are  relevant to our work  \cite{DX}, \cite{MY}, \cite{NRT}.

\section{Lie groups and their representations}\label{Lie}

\subsection{Lie groups and their representations in Banach spaces}

Lie algebra $\textbf{g}$ of a Lie group $G$ can be identified with the tangent space $T_{e}(G)$ of $G$ at the identity $e\in G$.
Let
$$
\exp (tX): \> T_{e}(G)\rightarrow G,\>\>t\in \mathbb{R}, \>\>X\in T_{e}(G),
$$
 be the exponential geodesic map i.  e.
$\exp (tX)=\gamma (1),$ where $\gamma (t)$ is a  geodesic of a fixed left-invariant metric on $G$ which 
starts at $e$ with the
initial vector $tX\in T_{e}(G)$:  $\>\gamma (0)=e , \>\>\>\frac{d\gamma (0)}{dt}=tX.$ It is known that $\exp$ is an analytic homomorphism of $\mathbb{R}$ onto one parameter subgroup  $\exp tX$ of  $G$: 
$$
\exp \left((s+t)X\right)= \exp(sX) \exp(tX),\>\>s,t\in \mathbb{R}.
$$
Let  $X_{1},...,X_{d}, \>\>d= dim \>G$ form a basis in the Lie algebra of $G$, then one can consider the following coordinate system in a neighborhood of identity $e$
\begin{equation}
\label{1cs}
(t_{1}, ..., t_{d})\mapsto\exp(t_{1}X_{1}+...+t_{d}X_{d}).
\end{equation}
If $Y_{1}=s_{1}X_{1}+...+s_{d}X_{d}$ and $Y_{2}=t_{1}X_{1}+...+t_{d}X_{d}$ then 
$$
\exp Y_{1}\exp Y_{2}=\exp Z,
$$
where $Z$ is given by the Campbell-Hausdorff formula
\begin{equation}
\label{CH}
Z=Y_{1}+Y_{2}+\frac{1}{2}\left[Y_{1}, Y_{2}\right]+
\frac{1}{12}[Y_{1},[Y_{1},Y_{2}]]-\frac{1}{12}[Y_{2},[Y_{1},Y_{2}]]-\frac{1}{24}[Y_{2},[Y_{1},[Y_{1}, Y_{2}]]]+...   \>\>\>.
\end{equation}
It implies that $Z=\zeta_{1}X_{1}+...+\zeta_{d}X_{d}$, where 
\begin{equation}
\label {CH-2}
\zeta_{j}=s_{j}+t_{j}+ O(\epsilon^{2}),\>\>\>|t_{j}|, |s_{j}|\leq \epsilon, \>\>\>1\leq j\leq d.
\end{equation}
 One can also consider another local coordinate system around $e$ which is given by the formula
\begin{equation}
\label{2cs}
(t_{1}, ..., t_{d})\mapsto\varphi\left(\sum_{j=1}^{d}t_{j}X_{j}\right)=\exp (t_{1}X_{1})...\>\>\exp (t_{d}X_{d}).
\end{equation}

Let us remind that a strongly continuous representation of a Lie group $G$ in a Banach space $\mathbf{E}$ is  a homomorphism $g\mapsto T(g),\>\>\>g\in G,\>\>\>T(g)\in GL(\mathbf{E})$, of $G$ into the group $GL(\mathbf{E}) $ of linear bounded invertible operators in $\mathbf{E}$ such that trajectory $T(g)f, \>\>g\in G,\>\>f\in \mathbf{E}, $ is continuous with respect to $g$ for every $f\in \mathbf{E}$. We will consider only uniformly bounded representations. In this case one can introduce a new norm $\|f\|^{'}_{\mathbf{E}}=\sup_{g\in G}\|T(g)f\|_{\mathbf{E}},\>\>f\in \mathbf{E},$ in which $\|T(g)f\|^{'}_{\mathbf{E}}\leq \|f\|^{'}_{\mathbf{E}}$.  Thus, without any restriction  we will assume that the last inequality  is satisfied in the original norm $\|\cdot\|_{\mathbf{E}}$.
Let  $X_{1}, ... , X_{d}$ be a basis in $\mathbf{g}$.  With every $X_{j}, \>\>1\leq j\leq d,$ one associates a  strongly continuous one-parameter group of isometries $t\mapsto T(\exp t X_{j}),\>\>t\in \mathbb{R},$ whose generator is denoted as $D_{j},\>\>1\leq j\leq d$.

\begin{lemma}
If $T: G\mapsto GL(\mathbf{E})$ is a strongly continuous bounded representation of $G$ in a Banach space $\mathbf{E}$ and $T_{j}(t)=T(\exp t X_{j})$, where $\{X_{1}, ..., X_{d}\}$ is a basis in $\mathbf{g}$ then the Assumption 1 is satisfied for groups $T_{j}$ and their infinitesimal operators $D_{j},\>\>1\leq j\leq d$.

\end{lemma}

\begin{proof}
The fact that the Garding space $\mathcal{G}\subset \mathbf{E}^{1}$ is dense in $\mathbf{E}$ and invariant is well known \cite{Nel}, \cite{NS}.
Since $\exp$ and $\varphi$ are diffeomorphisms in a neighborhood of zero in $\textbf{g}$ the map $\exp^{-1}\circ \varphi: \textbf{g}\mapsto \textbf{g}$ is also a diffeomorphism. 
The formulas (\ref{CH}) and (\ref{CH-2})  give connection between (\ref{1cs}) and (\ref{2cs}) 
\begin{equation}\label{1-2cs}
\varphi\left(\sum_{j=1}^{d}t_{j}X_{j}\right)=\exp \left(\sum_{k=1}^{d}\alpha_{k}(t)X_{k}\right),\>\>t=(t_{1},,, t_{d}),
\end{equation}
where 
\begin{equation}
\label{3cs}
\alpha_{j}(t)=t_{j}+O(\epsilon^{2}),\>\>\>|t_{j}|\leq \epsilon,\>\>\>1\leq j\leq d.
\end{equation}
In particular,  (\ref{CH}) implies
$$
\exp \tau X_{j}\exp\sum_{i=1}^{d}t_{i}X_{i}=\exp\sum_{k=1}^{d}\gamma_{k}^{j}(t,\tau)X_{k},\>\>\>t=(t_{1}, ... ,t_{d}),
$$
where
\begin{equation}\label{gamma}
\gamma_{k}^{j}(t,\tau)=t_{k}+\tau\zeta_{k}^{j}(t)+\tau^{2}R_{k}^{j}(t,\tau),
\end{equation}
and $\zeta_{k}^{j}(t)=\delta^{j}_{k}+Q_{k}^{j}(t) $, where $\delta_{k}^{j}$ is the Kronecker symbol and $ Q_{k}^{j}(t) $ and $R_{k}^{j}(t, \tau)$  are  convergent series in $t_{1}, ... ,t_{d}$ and  $t_{1}, ... ,t_{d}, \tau$ respectively.

Since  for $f\in \mathbf{E}^{1}$ one has $D_{j}T(g)f=\frac{d}{d\tau}T\left(\exp \tau X_{j}\right)T(g)f|_{\tau=0}$
we obtain for $f\in \mathbf{E}^{1}$ the following
$$
D_{j}T_{1}(t_{1})...T_{d}(t_{d})f=D_{j}T\left( \varphi \left(\sum_{i=1}^{d}t_{i}X_{i}\right)\right)=D_{j}T\left(  \exp\sum_{i=1}^{d}\alpha_{i}(t)X_{i}\right)f=
$$
$$
\frac{d}{d\tau}T\left(\exp \tau X_{j}\right)T\left( \varphi \left(\sum_{i=1}^{d}t_{i}X_{i}\right)\right)f|_{\tau=0}=\frac{d}{d\tau}T\left(\exp\sum_{i=1}^{d}\gamma_{i}^{j}(\alpha, \tau)X_{i}\right)f|_{\tau=0},
$$
where $\alpha=\left(\alpha_{1}(t), ... , \alpha_{d}(t)\right)$ and according to  (\ref{gamma}) $ \gamma_{i}^{j}(\alpha, \tau)=\alpha_{i}(t)+\tau\zeta_{i}^{j}(\alpha(t))+\tau^{2}R_{i}^{j}(\alpha, \tau)$.
By using the Chain Rule and (\ref{1-2cs})  we finally obtain the formula (\ref{assumpt-1})
$$
D_{j}T_{1}(t_{1})...T_{d}(t_{d})f=\frac{d}{d\tau}T\left(\exp\sum_{i=1}^{d}\gamma_{i}^{j}(\alpha, \tau)X_{i}\right)f|_{\tau=0}=
$$
$$
\sum_{k=1}^{d}\left(\frac{d}{d\tau}\gamma_{k}^{j}(\alpha, \tau)|_{\tau=0}\right)\partial_{k}T\left(\exp\sum_{i=1}^{d}\gamma_{i}^{j}(\alpha, \tau)X_{i}\right)f|_{\tau=0}=\sum_{k=1}^{d}\zeta_{k}^{j}(t)\partial_{k}T_{1}(t_{1})... T_{d}(t_{d})f.
$$
 Lemma is proved.
\end{proof}

\subsection{Unitary representations in Hilbert spaces}

A strongly continuous unitary representation of a Lie group $G$ in a Hilbert space $\mathbf{H} $ is a homomorphism  $T: G \mapsto U(\mathbf{H})$ where $U(\mathbf{E})$ is the group of unitary operators of $\mathbf{H}$ such that $T(g)f,\>\>g\in G,$ is continuous on $G$ for any $f\in \mathbf{H}$.  The Garding space $\mathcal{G}$ is defined as the set of vectors $h$  in $\mathbf{H}$ that have the representation
$
h=\int_{G}\varphi(g)T(g)f dg,
$
where $f\in \mathbf{H}$, $\>\>\varphi\in C_{0}^{\infty}(G)$, $\>\>dg$ is a left-invariant measure on $G$.  If $X\in \mathbf{g}$ is identified with a  right-invariant vector field
$$
X\varphi(g)=\lim_{t\rightarrow 0}\frac{\varphi\left( \exp tX \cdot g\right)-\varphi(g)}{t},
$$ 
then one has a representation $D(X)$ of $\mathbf{g}$ by operators which act  on $\mathcal{G}$ by the formula 
$
D(X)h=-\int_{G}X\varphi(g)T(g)fdg.
$
It is known  that 
$
\mathcal{G}\subset \bigcap_{r\in N}\mathbf{H}^{r}=\mathbf{H}^{\infty}
$
 is invariant with respect  to all operators $D(X), \>\>X\in \textbf{g},$ and dense in every $\mathbf{H}^{r}$.
 If $X_{1}, ..., X_{d}$ is a basis in $\mathbf{g}$ and $D_{i}=D(X_{i}),\>\>1\leq i \leq d,$ we consider the operator $L_{\mathcal{G}}=-\sum_{i=1}^{d}D_{i}^{2}$ defined on $\mathcal{G}$.

 Since $L_{\mathcal{G}}$ is symmetric and the differential operator $-\sum_{i=1}^{d}X_{i}^{2}$ is elliptic  on the group $G$ the Theorem 2.2 in \cite{NS} implies that $L_{\mathcal{G}}$ is essentially self-adjoint, which means $\overline{L}_{\mathcal{G}}=L_{\mathcal{G}}^{*}$. In other words, the closure $\overline{L}_{\mathcal{G}}=L$ of $L_{\mathcal{G}}$ from $\mathcal{G}$ is a self-adjoint operator. Obviously, $L\geq 0$. We introduce the self-adjoint operator 
 $
 \Lambda=I+L\geq 0.
 $

\begin{theorem}
The space $\mathbf{H}^{r}$ with the norm (\ref{Sob}) is isomorphic  to the domain of $\Lambda^{r/2}$ with the norm $\|\Lambda^{r/2}f\|_{\mathbf{H}}.$
\end{theorem}

\begin{proof}
In the case $r=2k,$ the inequality 
 \begin{equation}\label{2.9}
  \|f\|_{\mathbf{H}^{2k}}\leq C(k)\|\Lambda^{k}f\|_{\mathbf{H}}
\end{equation}
is shown in \cite{Nel}, Lemma 6.3. The reverse inequality is obvious.   
We consider now the case $r=2k+1$.  If $f\in \mathbf{H}^{2}=\mathcal{D}(\Lambda)$, then since 
$
\mathcal{D}(\Lambda)\subset \mathcal{D}(\Lambda^{1/2})
$
we have
\begin{equation}
\label{1/2}
\|f\|_{\mathbf{H}}
^{2}+\sum_{j}\|D_{j}f\|_{\mathbf{H}}
^{2}=\left<f,f\right>+\sum_{j}\left<D_{j}f,D_{j}f\right>=\left<f,f\right>+\left<-\sum_{j}D_{j}^{2}f,f\right>=
$$
$$
\left<f-\sum_{j}D_{j}^{2}f,f\right>=\left<\Lambda f,f\right>=\|\Lambda^{1/2}f\|_{\mathbf{H}}
^{2}.
\end{equation}
These equalities imply that $\mathbf{H}^{1}$ is isomorphic to $\mathcal{D}(\Lambda^{1/2})$.
Our goal is to to prove existence of an isomorphism between $\mathbf{H}^{2k+1}$ and $\mathcal{D}(\Lambda^{k+1/2})$. It is enough to establish equivalence of the corresponding norms on the set $\mathbf{H}^{4k+2}=\mathcal{D}(\Lambda^{2k+1})$ since the latest is dense in $\mathbf{H}^{2k+1}$.
If $f\in \mathbf{H}^{4k+2}\subset \mathbf{H}^{2k}$ then $D_{j}f\in \mathbf{H}^{4k+1}\subset \mathbf{H}^{2k}$ and 
$
\Lambda^{k}f=\sum_{m\leq k}\sum D_{j_{1}}^{2}...D_{j_{m}}^{2}f.
$
Thus if $f\in \mathbf{H}^{4k+2}$ then
$$
\left\|D_{j_{1}}...D_{j_{2k+1}}f  \right \|_{\mathbf{H}}
\leq C\left\|\Lambda^{k}D_{j_{2k+1}}f \right\|_{\mathbf{H}}
=\left\|\sum_{m\leq k}\sum D_{j_{1}}^{2}...D_{j_{m}}^{2}D_{j_{2k+1}}f \right\|_{\mathbf{H}}
.
$$
Multiple applications of the identity $D_{i}D_{j}-D_{j}D_{i}=\sum_{k} c_{i,j}^{k}D_{k}$ which holds on $\mathbf{H}^{2}$  lead to the inequality
$
\left\|D_{j_{1}}...D_{j_{2k+1}}f \right \|_{\mathbf{H}}
 \leq C\left(\|D_{j_{2k+1}}\Lambda^{k}f\|_{\mathbf{H}}
+\|Rf\|_{\mathbf{H}}
\right),
$
where $R$ is a polynomial in $D_{1},...,D_{d}$ whose degree $\leq 2k$. According to (\ref{2.9}) and (\ref{1/2}) we have that 
$$
\left\|D_{j_{2k+1}}\Lambda^{k}f \right\|_{\mathbf{H}}
 \leq  \left\|\Lambda^{1/2}\Lambda^{k}f \right\|_{\mathbf{H}}
=\left\|\Lambda^{k+1/2}f \right\|_{\mathbf{H}}
$$
and also
$
\left\|Rf \right \|_{\mathbf{H}}
\leq \|f\|_{\mathbf{H}^{2k}}\leq C(k)\left\|\Lambda^{k}f \right\|_{\mathbf{H}}
.
$
Since $\|\Lambda^{k}f\|_{\mathbf{H}}
$ is not decreasing with $k$ we get the following estimate 
$$
\|D_{j_{1}}...D_{j_{2k+1}}f\|_{\mathbf{H}}
\leq C(k)\|\Lambda^{k+1/2}f\|_{\mathbf{H}}
, \>\>\>f\in \mathbf{H}^{4k+2}.
$$
Now, since for $f\in \mathbf{H}^{4k+2}$ we have 
$
D_{j_{1}}... D_{j_{2k}}f\in \mathbf{H}^{2k+2}\subset \mathbf{H}^{1}=\mathcal{D}(\Lambda^{1/2}),
$
and the equality 
$
\Lambda^{k}f=\sum_{m\leq k}\sum D_{j_{1}}^{2}...D_{j_{m}}^{2}f,
$
holds we obtain, by using (\ref{1/2}) 
$$
\|\Lambda^{k+1/2}f\|_{\mathbf{H}}
=\|\Lambda^{1/2}\sum_{m\leq k}\sum D_{j_{1}}^{2}...D_{j_{m}}^{2}f\|_{\mathbf{H}}
\leq C\|f\|_{\mathbf{H}^{2k+1}},\>\>\>\>C=C(k).
$$
Theorem is proved.  
\end{proof}

\begin{col}
If $T$ is a strongly continuous unitary representation of a Lie group in a Hilbert space $\mathbf{H}$ and $X_{1},  ..., X_{d}$ is a basis in the corresponding algebra Lie $\mathbf{g}$ then for $T_{j}(t)=T(\exp tX_{j}),\>\>1\leq j\leq d,$ and their generators $D_{j}, \>\>1\leq j\leq d,$ the Assumption 2 is satisfied.
\end{col}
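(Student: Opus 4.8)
The plan is to assemble the Corollary from the Lemma and the Theorem that immediately precede it, so that essentially nothing new has to be computed. By the previous Lemma, Assumption 1 already holds for the groups $T_{j}(t)=T(\exp tX_{j})$ and their generators $D_{j}$, so only the two clauses of Assumption 2 remain. For clause (1) I would take $L$ to be the non-negative Laplacian $-\sum_{i=1}^{d}D_{i}^{2}$, which is the sign convention making $L\geq 0$: each $D_{i}$, being the generator of a one-parameter group of unitaries, is skew-adjoint (Stone's theorem), so $-D_{i}^{2}=D_{i}^{*}D_{i}\geq 0$. Both the self-adjointness and the non-negativity of $L$ are exactly what was established in the paragraph preceding the Theorem: $L_{\mathcal{G}}=-\sum D_{i}^{2}$ is symmetric on the Garding space $\mathcal{G}$, the operator $-\sum X_{i}^{2}$ is elliptic on $G$, Theorem 2.2 of \cite{NS} gives essential self-adjointness, and $L=\overline{L_{\mathcal{G}}}\geq 0$. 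Thus clause (1) needs no further work.

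The remaining task is clause (2): identifying $\mathbf{H}^{k}$ with $\mathcal{D}(L^{k/2})$ together with the corresponding norm equivalence. Here I would invoke the previous Theorem, which already shows that $\mathbf{H}^{k}$ with the norm (\ref{Sob}) is isomorphic to $\mathcal{D}(\Lambda^{k/2})$ equipped with $\|\Lambda^{k/2}f\|_{\mathbf{H}}$, where $\Lambda=I+L$. Consequently the Corollary reduces to the purely spectral step of passing from the shifted operator $\Lambda^{k/2}$ to $L^{k/2}$, namely to proving $\mathcal{D}(\Lambda^{k/2})=\mathcal{D}(L^{k/2})$ with $\|f\|_{\mathbf{H}}+\|L^{k/2}f\|_{\mathbf{H}}\asymp\|\Lambda^{k/2}f\|_{\mathbf{H}}$.

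I expect this last step to be the only genuine content, and it is handled directly by the spectral theorem for the non-negative self-adjoint operator $L=\int_{0}^{\infty}\lambda\,dE_{\lambda}$. One has $\|\Lambda^{k/2}f\|^{2}=\int_{0}^{\infty}(1+\lambda)^{k}\,d\langle E_{\lambda}f,f\rangle$ and $\|f\|^{2}+\|L^{k/2}f\|^{2}=\int_{0}^{\infty}(1+\lambda^{k})\,d\langle E_{\lambda}f,f\rangle$, and the elementary two-sided bound $c_{k}(1+\lambda^{k})\leq(1+\lambda)^{k}\leq C_{k}(1+\lambda^{k})$ on $[0,\infty)$ forces the two quadratic forms to be comparable; hence the domains coincide and the norms are equivalent. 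Combining this equivalence with the isomorphism furnished by the Theorem yields clause (2), and the Corollary follows. The only points requiring care are bookkeeping the sign convention so that the spectral comparison is applied to the genuinely non-negative $L$, and confirming the constants $c_{k},C_{k}$; no deeper obstacle arises.
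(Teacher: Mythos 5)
Your proposal is correct and follows essentially the same route as the paper: the Corollary is stated there without a separate proof precisely because it is the combination of the preceding Lemma (giving Assumption 1), the Nelson--Stinespring argument showing $L=\overline{L}_{\mathcal{G}}\geq 0$ is self-adjoint, and the preceding Theorem identifying $\mathbf{H}^{k}$ with $\mathcal{D}(\Lambda^{k/2})$, $\Lambda=I+L$. The only remaining step, passing from $\|\Lambda^{k/2}f\|_{\mathbf{H}}$ to $\|f\|_{\mathbf{H}}+\|L^{k/2}f\|_{\mathbf{H}}$ via the elementary spectral bound $c_{k}(1+\lambda^{k})\leq(1+\lambda)^{k}\leq C_{k}(1+\lambda^{k})$ on $[0,\infty)$, is exactly the reduction you carry out.
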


 \section{Hardy-Steklov operator associated with operators $D_{1}, D_{2}, ..., D_{d}$.}

We return to the general set up. Namely, we consider a Banach space $\mathbf{E}$ and operators $D_{1}, D_{2},...,D_{d}$ which generate strongly continuous uniformly bounded semigroups $T_{1}(t), T_{2}(t),...,T_{d}(t), \>\> \|T(t)\|\leq 1, \>\>t\geq 0.$ It is assumed that the Assumption 1 holds.

 \subsection{Preliminaries} 
 
 If $D$ generates in $\mathbf{E}$ a strongly continuous bounded semigroup $T_{D}(t),\>\>t\geq 0, $ and 
$$
\Omega^{r}( s, f)= 
\sup_{0\leq\tau\leq s}\|\left(T_{D}(\tau)-I\right)^{r}f\|_{\mathbf{E}},
$$
where $I$ is the identity operator, then one can easily prove  the following two inequalities 
\begin{equation}
\Omega^{m}\left(f, s\right)\leq s^{k}\Omega^{m-k}(D^{k}f,
s), \label{???}
\end{equation}
and 
\begin{equation}
\Omega^{m}\left(f, as\right)\leq \left(1+a\right)^{m}\Omega^{m}(f,
s),\>\> a\in \mathbb{R}.\label{???}
\end{equation}

Let $F(x_{1}, x_{2},...,x_{N})$ be a function on
$\mathbb{R}^{N} $
 that takes values in  the Banach space $\mathbf{E}$
 $$
 F: \mathbb{R}^{N}\longmapsto \mathbf{E}.
 $$
  For $1\leq i\leq N$ we introduce  difference operator
by the
 formula
 \begin{equation}\label{dif-op}
 (\Delta_{i}(s)F)(x_{1}, x_{2},...,x_{N})=F(x_{1},
x_{ 2},...,x_{i-1}, s, x_{i+1}, ..., x_{N})- 
 $$
  $$
  F(x_{1}, x_{2},...,
 x_{i-1}, 0, x_{i+1}, ..., x_{N}).
 \end{equation}
  For a scalar differentiable function $\theta: \mathbb{R}^{N} \longmapsto  \mathbb{R}$  and for $1\leq i_{1},..., i_{l}\leq N,\>\>i_{m}\neq i_{k}$, the following inequality holds
$$
\max_{0\leq x_{j}\leq
s_{j}}|\Delta_{i_{1}}(s_{i_{1}})...\Delta_{i_{l}}(s_{i_{l}})
 \theta (x_{1},...,x_{N})| \leq
s_{i_{1}}...s_{i_{l}}\max_{0\leq x_{j}\leq s_{j}}\left|\frac{\partial^{k}}{\partial_{i_{1}}...\partial_{i_{k}}}
 \theta(x_{1},...,x_{N})\right|.
 $$
 One has for $1\leq i\leq N$
\begin{equation}\label{product}
\Delta_{i}(s)(\theta F)(x_{1},...,x_{N})=
\Delta_{i}(s)\theta(x_{1},...,x_{N})F(x_{1},... ,x_{i-1}, s, x_{i+1}, ..., x_{N})+
$$
$$
\theta (x_{1},... ,x_{i-1}, 0, x_{i+1}, ..., x_{N})\Delta_{i}(s)F(x_{1}, ..., x_{N}),
\end{equation}
and then if $i_{m}\neq i_{k}$  one has
\begin{equation}\label{Long}\Delta_{i_{1}}(s_{i_{1}})...\Delta_{i_{l}}(s_{i_{l}})(\theta
F)(x_{1},..., x_{N})
 =
 $$ 
 $$
 \sum \Delta_{i_{1}^{(1)}}(s_{i_{1}^{(1)}})...\Delta_{i_{l}^{(1)}}(s_{i_{l}^{(1)}})
  \theta(x^{(0)}
 _{i_{1}^{(2)},...,i_{l}^{(2)}})   \Delta_{i_{1}^{(2)}}(s_{i_{1}^{(2)}})...\Delta_{i_{l}^{(2)}}(s_{i_{l}^{(2)}})
F(x^{(s)}
 _{i_{1}^{(1)},...,i_{l}^{(1)}}),
\end{equation}
  where the sum is taken over all possible partitions of the natural vector
 $(i_{1},...,i_{l})$ in the sum of nonnegative integer vectors
$(i^{(1)}_{1},... ,i^{(1)}_{l})$
 and $(i_{1}^{(2)},...,i^{(2)}_{l})$. 
 The
 $x^{(0)}_{i_{1}^{(2)},...,i_{l}^{(2)}}$ denotes a vector obtained from the vector
  $(x_{1},...,x_{N})$ by replacing the
 coordinates 
 $x_{i_{1}^{(2)},...,i_{l}^{(2)}}$ by $0$ and
 $x^{(s)}_{i_{1}^{(1)}
 ,..., i_{l}^{(1)}},$ denotes the vector obtained by replacing
$i_{1}^{(1)},...,i_{l}^{(1)}$ by
$s_{i^{(1)}_{1}},...,s_{i^{(1)}_{l}},$ respectively.

If $F$ is a  differentiable function  and $m\neq i$ then we have
\begin{equation}\label{commute-0}
\Delta_{i}(s)\frac{\partial}{\partial x_{m}}F(x_{1},  x_{2},..., x_{N})=\frac{\partial}{\partial x_{m}}\Delta_{i}(s)F(x_{1}, x_{2},...,x_{N}), \>\>\>m\neq i.
\end{equation}
 In particular, for any $r\geq 2$ for $\tau_{j}=\tau_{j,1}+...+\tau_{j,r}$, $j=1,...,d,\>1\leq i, i^{'}\leq d,\>1\leq k, k^{'}\leq r, $ if $(i^{'}, k^{'})\neq (i,k)$
we have 
 $$
 \frac{\partial}{\partial \tau_{i^{'}, k^{'}}}\Delta_{i,k}(s)\prod_{j=1}^{d}T_{j}\left(\tau_{j}\right)f=
 \Delta_{i,k}(s) \frac{\partial}{\partial \tau_{i^{'}, k^{'}}}\prod_{j=1}^{d}T_{j}\left(\tau_{j}\right)f, \>\>\>f\in \mathbf{E}^{1},
 $$
 or, equvalently,
 $$
 \frac{\partial}{\partial \tau_{i^{'}}}\Delta_{i,k}(s)\prod_{j=1}^{d}T_{j}\left(\tau_{j}\right)f=
 \Delta_{i,k}(s) \frac{\partial}{\partial \tau_{i^{'}}}\prod_{j=1}^{d}T_{j}\left(\tau_{j}\right)f, \>\>\>f\in \mathbf{E}^{1},
 $$  
 where the right-hand side and $
\Delta_{i,k}(s)\prod_{j=1}^{d}T_{j}\left(\tau_{j}\right)f 
 $ are defined  according to (\ref{dif-op}).
 In the same notations $\tau_{j}=\tau_{j,1}+...+\tau_{j,r},\>1\leq m, i\leq d,\>1\leq k\leq r,$ we clearly have the identity
 $$
  D_{m}\Delta_{i,k}(s) \prod_{j=1}^{d}T_{j}\left(\tau_{j}\right)f= \Delta_{i,k}(s)  D_{m}\prod_{j=1}^{d}T_{j}\left(\tau_{j}\right)f,\>\>\>f\in \mathbf{E}^{1},
  $$
 which implies   for every $f\in \mathbf{E}^{1}$ the next formula
 \begin{equation}\label{commute}
 D_{m}\Delta_{i_{1}, k_{1}}(s)..\Delta_{i_{l}, k_{l}}(s)\prod_{j=1}^{d}T_{j}\left(\tau_{j}\right)f=\Delta_{i_{1}, k_{1}}(s)..\Delta_{i_{l}, k_{l}}(s) D_{m}\prod_{j=1}^{d}T_{j}\left(\tau_{j}\right)f.
 \end{equation}
 Using (\ref{commute}),   (\ref{assumpt-1}), (\ref{Long}) and (\ref{commute-0}) 
we obtain for $f\in\mathbf{E}^{1}$,  $\tau_{j}=\tau_{j,1}+...+\tau_{j,r}$, $j=1,...,d,$ $\>r\geq l+1, \>1\leq m\leq d$,
\begin{equation}\label{long}
D_{m}\Delta_{i_{1}, k_{1}}(s)...\Delta_{i_{l}, k_{l}}(s)\prod_{j=1}^{d}T_{j}\left(\tau_{j}\right)f=
$$
 $$\sum \sum _{i_{l+1}=1}^{d}\Delta_{i_{1}^{(1)},k_{1}^{(1)}}(s)...\Delta_{i_{l}^{(1)},
 k_{l}^{(1)}}(s)\zeta^{i_{l+1}}_{m}(...)    \frac{\partial}{\partial\tau_{i_{l+1}}}
 \Delta_{i_{1}^{(2)},k_{1}^{(2)}}(s)...\Delta_{i_{l}^{(2)},k_{l}^{(2)}}(s)\prod_{j=1}^{d}T_{j}\left(...\right)f,
\end{equation}
where outer summation $\sum$ and arguments of $\zeta^{i_{l+1}}_{m}(...)$ and $\prod_{j=1}^{d}T_{j}\left(...\right)f$ the same as in (\ref{Long}).

\subsection{The Hardy-Steklov operator}

We introduce a generalization of the classical Hardy-Steklov operator.
For a positive small $s$, natural $r$ and $1\leq j\leq d$ we set

$$ 
H_{j.r}(s)f=(s/r)^{-r}\int_{0}^{s/r}...\int_{0}^{s/r}\sum
_{k=1}^{r}(-1)^{k}
C^{k}_{r}T_{j}( k(\tau_{j,1}+...+\tau_{j,r})fd\tau_{j,1}...d\tau_{j,r},
$$
where $C^{k}_{r}$ are the binomial coefficients and then define the Hardy-Steklov operator: 
$H_{r}(s)f=\prod_{j=1}^{d}H_{j,r}(s)f=
H_{1,r}(s)H_{2,r}(s)...H_{d,r}(s)f,\>\>\>\>f\in \mathbf{E}.
 $
For every fixed $f\in \mathbf{E}$ the function $H_{r}(s)f$ is an abstract valued function from  $\mathbb{R}$ to $\mathbf{E}$ and it is a linear combination of  some abstract valued functions  
of the form

\begin{equation}
\label{genterm}
(s/r)^{-rd}\int_{0}^{s/r}...\int_{0}^{s/r}Tf(\tau) d\tau_{1,1}...d\tau_{d,r},
\end{equation}
where 
\begin{equation}\label{10}
\tau=(k_{1}\tau_{1}, k_{2}\tau_{2},...,k_{d}\tau_{d}),\>\>\>\>1\leq k_{j}\leq r,
\end{equation}
\begin{equation}\label{20}
\tau_{j}=(\tau_{j,1}+\tau_{j,2}+...+ \tau_{j,r}), \>\>\>\>1\leq j\leq d,
\end{equation}
and 
\begin{equation}
\label{3}
Tf(\tau)=T_{1}(k_{1}\tau_{1})T_{2}( k_{2}\tau_{2}) ... \>T_{d}(k_{d}\tau_{d})f.
\end{equation}

\begin{lemma}\label{Mlemma}The following holds:

\begin{enumerate}

\item For every $f\in \mathbf{E}$ the function $H_{r}(s)f$ maps $\mathbb{R}$ to $\mathbf{E}^{r}$.

\item For every $0\leq q\leq r$ the "mixed derivative" 
$D_{j_{1}} ... D_{j_{q}}H_{r}(s)f , \>\>\>1\leq j_{k}\leq d $ is another abstract valued function with values in $\mathbf{E}^{r-q}$  and it is a linear combination of abstract valued functions  (with values in $\mathbf{E}^{r}$) of the form
\begin{equation}\label{lemma-formula}
(s/r)^{-rd}\underbrace{\int_{0}^{s/r}.....\int_{0}^{s/r}}_{rd-m}\mu_{j_{1},...,j_{q}}^{i_{1},...,i
_{l}; m} ( \cdot)\Delta_{i_{1},k_{1}}(s/r) .... \Delta_{i_{l},k_{l}}(s/r)Tf(\cdot) d\cdot,
\end{equation}
 where
 \begin{equation}\label{estimate}
  \max_{0\leq\tau_{i,j}\leq s}|\partial ^{p} \mu_{j_{1},...,j_{q}}^
 {i_{1},...,i_{l}; m} ( \cdot )|\leq cs^{m-l},\>\>\>p\in \mathbb{N}\cup \{0\},
\end{equation}
and  $0<s<1,\>\>0\leq m\leq rd,\>\>0\leq l\leq m,$  where $l=0$ corresponds to the case when the set of indices $\{i_{1},..., i_{l}\}$ is empty.
\end{enumerate}

  \end{lemma}

  \begin{proof}
  
  The proof proceeds by induction on $q$. We shall   show that
$H_{r}(s)f$
  is in $\mathbf{E}^{1}$ for every $f\in \mathbf{E}$.  Let's assume first that $f\in \mathbf{E}^{1}$.   
  Since every $D_{j},\>\>1\leq j\leq d,$ is a closed operator to show that the term  (\ref{genterm}) belongs
   to $\mathcal{D}(D_{j})$  it  is sufficient  to show existence of the integral  (see notations  (\ref{10}), (\ref{20}))
   \begin{equation}
   \label{preparation-1}
 \left(\frac{s}{r}\right)^{-rd}\int_{0}^{s/r}...\int_{0}^{s/r}D_{j}Tf(\tau)d\tau_{1,1}...d\tau_{d,r}.
   \end{equation}
According to (\ref{assumpt-1})  the last integral equals to 
   \begin{equation}
   \label{preparation}
\sum_{i=1}^{d} \left(\frac{s}{r}\right)^{-rd}\int_{0}^{s/r}...\int_{0}^{s/r}\zeta_{j}^{ i}(\tau)\partial_{i}Tf  (k_{1}\tau_{1}, ... , k_{d}\tau_{d})d\tau_{1,1}...d\tau_{d,r},
\end{equation}
where $\tau$ and $\>\>\tau_{j}$ described in    (\ref{10}) and (\ref{20}).                Since derivative $\partial_{i}$ is the same as the derivative $\frac{\partial}{\partial \tau_{i_{1}, 1}}$
the integration by parts  formula and (\ref{product}) 
  allow to continue (\ref{preparation}) as follows
  \begin{equation}\label{step 1}
  (s/r)^{-rd}\sum_{i=1}^{d}\left(\underbrace{\int_{0}^{s/r}...\int_{0}^{s/r}}_{rd-1}\zeta ^{i}_{j}
  (\tau^{(s/r)}_{i})\Delta_{i}(s/r)Tf(\tau )(d\tau)_{i}\right) +
  $$
  $$
   (s/r)^{-rd}\sum_{i=1}^{d}\left(\underbrace{ \int _{0}^{s/r} ... \int_{0}^{s/r}}_{rd-1}(\Delta_{i}(s/r)\zeta_{j}^{i}(\tau))
  Tf(\tau_{i}^{(0)})(d\tau)_{i}\right) -
  $$
  $$
  (s/r)^{-rd}\sum_{i=1}^{d}\left( \underbrace{\int_{0}^{s/r}
...\int_{0}^{s/r}}_{rd}(\partial_{i}\zeta^{i}_ {j})
  (\tau)Tf(\tau)d\tau \right)= B_{j}(s)f,
\end{equation}
  where $\tau=(\tau_{1,1}+...+\tau_{1,r}, \>... \>,  \tau_{n,1}+ ... +\tau_{n,r}),\>\>\>\>\tau_{i}^{s/r}=(\tau_{1,1}+...+\tau_{1,r}, \>... , \tau_{i,1}+...+\tau_{i,r-1}+s/r, ... , \tau_{n,1}+ ... +\tau_{n,r}),\>\>\>\>\tau_{i}^{0}=(\tau_{1,1}+...+\tau_{1,r}, \>... , \tau_{i,1}+...+\tau_{i,r-1}+0, ... , \tau_{n,1}+ ... +\tau_{n,r}), \>\>\>\>d\tau=d\tau_{1,1}....d\tau_{n,r},$
  and $ (d\tau)_{i}=d\tau_{1,1}.... d\tau_{i, r-1} \widehat{d\tau_{i,r}} d\tau_{i+1,1} ... d\tau_{n,r},$     where the term $d\tau_{i,r}$ is missing.

  Since integrand of each of the three integrals is bounded it implies  existence of (\ref{preparation-1}) which, in turn, shows that (\ref{genterm}) is an element of $\mathcal{D}(D_{j})$ for every $1\leq j\leq d$.

 Thus if $f$ belongs to $\mathbf{E}^{1}$ then 
  $H_{r}(s)f$ takes values in $\mathbf{E}^{1}$ and the formula
  $D_{j}H_{r}(s)f=B_{j}(s)f$ holds where  the operator
$B_{m}(s)$ is bounded.  This fact along with the fact that  $\mathbf{E}^{1}$ is dense in $\mathbf{E}$ implies the formula  $D_{j}H_{r}(s)f=B_{j}(s)f$ for every $f\in \mathbf{E}$. Thus we proved the first part of the Lemma for $q=1$.

To verify the second claim of the Lemma it is sufficient to note that in the first line of (\ref{step 1}) one has $m=l=q=1,  \>\>j=j_{1}$, $\>\>\mu_{j_{1}}^{1,1}=\zeta^{i}_{j}$,  in the second line $m=1,\>l=0,\>\>q=1,\>\>j=j_{1},\>\>\mu_{j_{1}}^{0,1}=\Delta_{i}(s/r)\zeta_{j}^{i}$, in the third line $m=l=0,\>q=1, \>\mu_{j_{1}}^{0,0}= \partial_{i}\zeta^{i}_ {j}$. It is easy to verify that in all of these cases the  estimates (\ref{estimate}) hold.

  We now assume that theorem is proved for all $q<r$ and we shall prove
it for $ q+1\leq r$.
  Suppose $f\in \mathbf{E}^{1}$. We consider

\begin{equation}\label{integral-0}
\int_{0}^{s/r}.....\int_{0}^{s/r}\mu_{j_{1},...,j_{q}}^{i_{1},...,i_{l}; m} (\cdot
) D_{j_{q+1}}     \Delta_{i_{1},k_{1}}(s/r) ....
\Delta_{i_{l},k_{l}}(s/r)T(\cdot)fd\cdot  . 
\end{equation}

By (\ref{long}) this expression is decomposed into a sum of terms of the
form

\begin{equation}\label{integral-1}
\int
_{0}^{s/r}...\int_{0}^{s/r}\nu_{j_{1},...,j_{q+1}}^{i_{1},...,i_{l+1};
m} (\cdot)
\partial_{i_{l+1}}\Delta_{i^{(2)}_{1},k^{(2)}_{1}}(s/r)
...\Delta_{i^{(2)}_{l},k^{(2)}_{l} }(s/r) T(\tau)fd\tau,
\end{equation}
 where $\nu_{j_{1},...,j_{q+1}}^{i_{1},...,i_{l+1};
m}$
are constructed from $\mu_{j_{1},...,j_{q}}^{i_{1},...,i_{l}; m}$ in accordance with (\ref{long}). Integrating by
parts as in the
 first step of  induction, we obtain that the integral (\ref{integral-1}) exists and
is equal to the
 value of some bounded operator at an element $ f\in \mathbf{E}^{1}$. As a
result we obtain that
 $H_{r}(s)$ belongs to the space $\mathbf{E}^{q+1}$ and also that
$D_{j_{1}}...D_{j_{q+1} }H_{r}(s)f$
 is a linear combination of terms of the form (\ref{lemma-formula}) . The rest of the theorem fellows from the
induction hypothesis.
 Theorem is proved.
\end{proof}

\section{Interpolation  spaces}\label{Interp}

\noindent
The goal of the section is to introduce basic notions of the theory of interpolation spaces \cite{BL}, \cite{BB},  \cite{KPS}. Later in section \ref{Bes} we will also introduce the so-called approximation spaces \cite{PS}, \cite{BSch}.  
It is important to realize that the relations between
interpolation and approximation spaces cannot be described
in  the language of normed spaces. We have to make use of
quasi-normed linear spaces in order to treat them
simultaneously.

A quasi-norm $\|\cdot\|_{\bf E}$ on linear space $\bf E$ is
a real-valued function on $\bf E$ such that for any
$f,f_{1}, f_{2} \in \EB$ the following holds true:  (1)$\|f\|_{\bf E}\geq 0;\>\>\>$
(2) $\|f\|_{\EB}=0  \Longleftrightarrow   f=0;\>\>\>$
(3)$\|-f\|_{\EB}=\|f\|_{\EB};\>\>\>$
(4) there exists some $C_{\EB} \geq 1$ such that
$\|f_{1}+f_{2}\|_{\EB}\leq C_{\EB}(\|f_{1}\|_{\EB}+\|f_{2}\|_{\EB}).\>\>$
Two quasi-normed linear spaces $\EB$ and $\FB$ form a
pair if they are linear subspaces of a common linear space
$\AB$ and the conditions
$\|f_{k}-g\|_{\EB}\rightarrow 0,$ and
$\|f_{k}-h\|_{\FB}\rightarrow 0$
imply equality $g=h$ (in $\AB$).
For any such pair $\EB,\FB$ one can construct the
space $\EB \cap \FB$ with quasi-norm
$
\|f\|_{\EB \cap \FB}=\max\left(\|f\|_{\EB},\|f\|_{\FB}\right)
$
and the sum of the spaces,  $\EB + \FB$ consisting of all sums $f_0+f_1$ with $f_0 \in \EB, f_1 \in \FB$, and endowed with the quasi-norm
$
\|f\|_{\EB + \FB}=\inf_{f=f_{0}+f_{1},f_{0}\in \EB, f_{1}\in
\FB}\left(\|f_{0}\|_{\EB}+\|f_{1}\|_{\FB}\right).
$

Quasi-normed spaces $\HB$ with
$\EB \cap \FB \subset \HB \subset \EB + \FB$
are called intermediate between $\EB$ and $\FB$.
If both $E$ and $F$ are complete the
inclusion mappings are automatically continuous. 
An additive homomorphism $T: \EB \rightarrow \FB$
is called bounded if
$
\|T\|=\sup_{f\in \EB,f\neq 0}\|Tf\|_{\FB}/\|f\|_{\EB}<\infty.
$
An intermediate quasi-normed linear space $\HB$
interpolates between $\EB$ and $\FB$ if every bounded homomorphism $T:
\EB+\FB \rightarrow \EB + \FB$
which is a bounded homomorphism of $\EB$ into
$\EB$ and a bounded homomorphism of $\FB$ into $\FB$
is also a bounded homomorphism of $\HB$ into $\HB$.
On $\EB+\FB$ one considers the so-called Peetre's $K$-functional
$$
K(f, t)=K(f, t,\EB, \FB)=\inf_{f=f_{0}+f_{1},f_{0}\in \EB,
f_{1}\in \FB}\left(\|f_{0}\|_{\EB}+t\|f_{1}\|_{\FB}\right).\label{K}
$$
The quasi-normed linear space $(\EB,\FB)^{K}_{\theta,q}$,
with parameters $0<\theta<1, \,
0<q\leq \infty$,  or $0\leq\theta\leq 1, \, q= \infty$,
is introduced as the set of elements $f$ in $\EB+\FB$ for which
\begin{equation}
\|f\|_{\theta,q}=\left(\int_{0}^{\infty}
\left(t^{-\theta}K(f,t)\right)^{q}\frac{dt}{t}\right)^{1/q} < \infty .\label{Knorm}
\end{equation}

It turns out that $(\EB,\FB)^{K}_{\theta,q}$
with the quasi-norm
(\ref{Knorm})  interpolates between $\EB$ and $\FB$.

\section{Approximation by Hardy-Steklov averages, K-functor and modulus of continuity}

We are going to prove  items (1) and (2) of Theorem \ref{Main}.
\begin{proof} First we prove the right-hand side of the inequality (\ref{main-ineq}).  The following simple lemma plays an important role in the roof.
\begin{lemma}
In any ring $\mathcal{R}$  with multiplicative identity $1$ the following formulas hold for $a_{1}, a_{2}, ....,a_{n}\in \mathcal{R},$
\begin{equation}\label{1}
a_{1}a_{2}...a_{n}-1=
a_{1}(a_{2}-1)+...+a_{1}a_{2}...a_{n-1}(a_{n}-1),
\end{equation}
\begin{equation}\label{2}
(a_{1}-1)a_{2}...a_{n}=(a_{1}-1)+a_{1}(a_{2}-1)+... +a_{1}a_{2}...   a_{n-1}(a_{n}-1).
\end{equation}
\end{lemma}
 By using the first formula we obtain

\begin{equation}\label{h-approx}
\left \|(-1)^{n(r+1)}H_{r}(s)f-f\right\|_{\mathbf{E}}=
\left \|\prod_{j=1}^{n}(-1)^{n+1}H_{j,r}(s)f-f\right\|_{\mathbf{E}}
\leq$$
 $$
c\sum_{j=1}^{n}\sup_{0\leq\tau_{j}\leq s/r}\left\|(T_{j}(\tau_{j})-I)^{r}f\right\|_{\mathbf{E}}\leq C\Omega^{r}(s,f) .
\end{equation}
To estimate $s^{r}\|H_{r}(s)f\|_{\mathbf{E}^{r}}$ we note that $\|H_{r}(s)f\|_{\mathbf{E}}\leq
C\|f\|_{\mathbf{E}}$.  According to Lemma  \ref{Mlemma}  the quantity
$\|D_{j_{1}}...D_{j_{r}}H_{r}(s)f\|_{\mathbf{E}}$ is estimated for $0\leq s\leq 1$ by
\begin{equation}\label{difestim}
s^{-l}\sup_{0\leq\tau _{j,k}\leq
s}\|\Delta_{j_{1},k_{1}}(s/r)...\Delta_{j_{l},k_{l}}(s/
r)T(\cdot)f\|_{\mathbf{E}},
\end{equation}
where  $T(\cdot)=
\prod_{j=1}^{n}\prod_{k=1}^{r}T_{j}(\tau_{j,k})$. By the
definition of
 $\Delta_{j,k}(s/r)$ the expression $\Delta_{j,k}(s/r)T(\cdot)$ differs from $T(\cdot)$
only in that in place of
  the factor $T_{j}(\tau_{j,k})$ the factor $T_{j}(s/r)-I$ appears.
  Multiple applications of the identity (\ref{2})    to the operator
$\Delta_{j_{1},k_{1}}(s/r )...\Delta_{j_{l}
  ,k_{l}}(s/r)T(\cdot)$ allow its expansion into a sum of operators each of
which is a product of not less than $l\leq r$ 
  of operators $T_{i}(\sigma_{i})-I, \>\>\>\sigma_{i} \in (0,s/r), \>\>\>1\leq i\leq n$. Consequently, (\ref{difestim}) is  dominated by a multiple of $s^{-l}\Omega^{l}(s,f)$.
By summing the estimates obtained above we arrive at the
inequality
\begin{equation}\label{up-estim}
K(s^{r},f, \mathbf{E}, \mathbf{E}^{r})\leq C\left( \sum_{l=1}^{n}s^{r-l}\Omega^{l}(s,f)+s^{r}\|f\|_{\mathbf{E}}\right),\>\>\>0\leq s\leq 1.
\end{equation}
Note, that by repeating the known proof for the classical modulus of continuity one can prove the inequality
$$
\Omega^{l}(s,f)\leq C\left(s^{l}\|f\|_{\mathbf{E}}+s^{l}\int_{s}^{1}\sigma^{-1-l}\Omega^{k+r}(\sigma, f)d\sigma\right),
$$
which implies 
$
s^{r-l}\Omega^{l}(s,f)\leq C\left( s^{r}\|f\|_{\mathbf{E}}+\Omega^{r}(s,f)\right).
$
By
applying  this inequality to (\ref{up-estim})  and taking into account the
inequality $K(s^{r}, f, \mathbf{E}, \mathbf{E}^{r})\leq \|f\|$, we obtain the right-hand side of the estimate (\ref{main-ineq}).
To prove the left-hand side of (\ref{main-ineq}) we first notice that  the following inequality holds
\begin{equation}\label{moduli-ineq}
\Omega^{r}(s, g)\leq Cs^{k}\sum_{1\leq j_{1}, ...j_{k}\leq n}\Omega^{r-k}\left(s, D_{j_{1}} ... D_{j_{k}}g\right),\>\>\>g\in \mathbf{E}^{k},\>\>\>C=C(k, r),\>\>\>k\leq r,
\end{equation}
which is an easy consequence of the identity (\ref{2}) and the identity $\left(T_{j}(t)-I\right)g=\int_{0}^{t}T_{j}(\tau)D_{j}gd\tau,\>\>g\in \mathcal{D}(D_{j})$. From here, for any $f\in \mathbf{E}, \>\>g\in \mathbf{E}^{r}$ we obtain 
$\Omega^{r}(s, f)\leq\Omega^{r}(s, f-g)+\Omega^{r}(s, g)\leq C\left(  \|f-g\|_{\mathbf{E}}+s^{r}\|g\|_{\mathbf{E}^{r}}\right)$. The first item of Theorem \ref{Main} is proven and it obviously implies second item of the same Theorem.

\end{proof}

\begin{remark}
The  proof shows that the left-hand side of (\ref{main-ineq}) holds true for any  finite set  of one-parameter strongly continuous bounded semigroups. 
\end{remark}

Below is the proof of items (3)-(5) of Theorem  \ref{Main}.

\begin{proof} We will need the following lemma.
\begin{lemma}
The following inequalities hold
\begin{equation}\label{I}
\|f\|_{\mathbf{E}^{k}}\leq C\|f\|_{\mathbf{E}}^{1-k/r}\|f\|_{\mathbf{E}^{r}}^{k/r},\>\>\>f\in \mathbf{E}^{r}, \>\>\>C=C(k,r),
\end{equation}
\begin{equation}\label{II}
K(s^{r},f, \mathbf{E}, \mathbf{E}^{r})\leq      Cs^{k}\|f\|_{\mathbf{E}^{k}},\>\>\>f\in \mathbf{E}^{k}, \>\>\>C=C(k,r).
\end{equation}

\end{lemma}
\begin{proof}
The first inequality follows from  its well-known one-dimensional version (see also \cite{Pes09a}). The second one follows from the right-hand estimate of (\ref{main-ineq}) and (\ref{moduli-ineq}).
\end{proof}
This lemma shows that one can use the Reiteration Theorem (see \cite{BB}, \cite{KPS}), which immediately implies item (3) of Theorem \ref{Main}. Next, let $\alpha>0,\>\>\>[\alpha]$ be a non-integer and its integer part respectively.  According to item (3) of Theorem \ref{Main} we have $\left( \mathbf{E}, \mathbf{E}^{r}\right)^{K}_{\alpha/r, q}=\left( \mathbf{E}^{[\alpha]}, \mathbf{E}^{r}\right)^{K}_{(\alpha-[\alpha])/(r-[\alpha]), q}$ and $\left( \mathbf{E}, \mathbf{E}^{1}\right)^{K}_{\alpha-[\alpha], q}=\left( \mathbf{E}, \mathbf{E}^{r-[\alpha]}\right)^{K}_{(\alpha-[\alpha])/(r-[\alpha]), q}$. Note, that $D_{j_{1}}D_{j_{2}}...D_{j_{[\alpha]}}$ is a continuous map from $\left( \mathbf{E}^{[\alpha]}, \mathbf{E}^{r}\right)^{K}_{(\alpha-[\alpha])/(r-[\alpha]), q}$ to $\left( \mathbf{E}, \mathbf{E}^{r-[\alpha]}\right)^{K}_{(\alpha-[\alpha])/(r-[\alpha]), q}$. All together it shows that if $f\in \left( \mathbf{E}, \mathbf{E}^{r}\right)^{K}_{\alpha/r, q}$ then $D_{j_{1}}D_{j_{2}}...D_{j_{[\alpha]}}f\in \left( \mathbf{E}, \mathbf{E}^{1}\right)^{K}_{\alpha-[\alpha], q}$ and 
\begin{equation}\label{(4)}
\left\|D_{j_{1}}D_{j_{2}}...D_{j_{[\alpha]}}f\right\|_{\left( \mathbf{E}, \mathbf{E}^{1}\right)^{K}_{\alpha-[\alpha], q}}\leq C\|f\|_{ \left( \mathbf{E}, \mathbf{E}^{r}\right)^{K}_{\alpha/r, q}}.
\end{equation}
Conversely,   let $D_{j_{1}}D_{j_{2}}...D_{j_{[\alpha]}}f\in \left( \mathbf{E}, \mathbf{E}^{1}\right)^{K}_{\alpha-[\alpha], q}=\left( \mathbf{E}^{[\alpha]}, \mathbf{E}^{r}\right)^{K}_{(\alpha-[\alpha])/(r-[\alpha]), q}.$ Then the right-hand estimate of (\ref{main-ineq}) and (\ref{moduli-ineq}) imply
\begin{equation}\label{(4')}
\|f\|_{ \left( \mathbf{E}, \mathbf{E}^{r}\right)^{K}_{\alpha/r, q}}\leq C\sum_{j_{1},...,j_{[\alpha]}=1}^{n}\left\|D_{j_{1}}D_{j_{2}}...D_{j_{[\alpha]}}f\right\|_{\left( \mathbf{E}, \mathbf{E}^{1}\right)^{K}_{\alpha-[\alpha], q}}.
\end{equation}
Inequalities (\ref{(4)}) and (\ref{(4')}) imply   item (4) of Theorem \ref{Main}. Proof of item (5) is similar. Theorem \ref{Main} is completely proved. 

\end{proof}

\section{Shannon sampling, Paley-Wiener  frames
and  abstract  Besov subspaces
  }\label{Hilbert}

\subsection{Paley-Wiener vectors in Hilbert spaces}

Consider a self-adjoint positive definite operator
$L$ in a Hilbert space $\mathbf{H}$.
Let $\sqrt{L}$ be the positive square root of $L$.
According to the spectral theory
for such operators \cite{BS}
there exists a direct integral of
Hilbert spaces $X=\int X(\lambda )dm (\lambda )$ and a unitary
operator $\mathcal{F}$ from $\mathbf{H}$ onto $X$, which
transforms the domains of $L^{k/2}, k\in \mathbb{N},$
onto the sets
$X_{k}=\{x \in X|\lambda ^{k}x\in X \}$
with the norm 
\begin{equation}\label{FT}
\|x(\lambda)\|_{X_{k}}= \left<x(\lambda),x(\lambda)\right>^{1/2}_{X(\lambda)}=\left (\int^{\infty}_{0}
 \lambda^{2k}\|x(\lambda )\|^{2}_{X(\lambda )} dm
 (\lambda ) \right )^{1/2}.
 \end{equation}
and satisfies the identity
$\mathcal{F}(L^{k/2} f)(\lambda)=
 \lambda ^{k} (\mathcal{F}f)(\lambda), $ if $f$ belongs to the domain of
 $L^{k/2}$.
 We call the operator $\mathcal{F}$ the Spectral Fourier Transform \cite{Pes6}, \cite{Pes00}. As known, $X$ is the set of all $m $-measurable
  functions $\lambda \mapsto x(\lambda )\in X(\lambda ) $,
  for which the following norm is finite:
$$\|x\|_{X}=
\left(\int ^{\infty }_{0}\|x(\lambda )\|^{2}_{X(\lambda )}dm
(\lambda ) \right)^{1/2} $$
For a function $F$ on $[0, \infty)$ which is bounded and measurable
with respect to $dm$
one can introduce the  operator $\FSL$
by using the formula
\begin{equation}\label{Op-function}
\FSL f=\mathcal{F}^{-1}F( \lambda)\mathcal{F}f,\>\>\>f\in \mathbf{H}.
\end{equation}
If $F$ is real-valued the operator $\FSL$ is self-adjoint.

\begin{remark} \label{rem:sqrtL} 
In many applications $L$ is a second-order differential operator and then $\sqrt{L}$ is a first-order  pseudo-differential operator.
\end{remark}
\begin{definition}\label{PWvector}
For $\sqrt{L}$  as above  we will  say that a vector $f \in\mathbf{H}$ belongs to the {\it Paley-Wiener space} $\PWoL$
if the support of the Spectral Fourier Transform
$\mathcal{F}f$ is contained in $[0, \omega]$.
\end{definition}
The next two facts are obvious.
\begin{theorem}The spaces $\PWoL$ have the following properties:
\begin{enumerate}
\item  the space $\PWoL$ is a linear closed subspace in
$\mathbf{H}$.
\item the space 
 $\bigcup _{ \omega >0}\PWoL$
 is dense in $\mathbf{H}$;
\end{enumerate}
\end{theorem}
Next we denote by $\mathbf{H}^{k}$  the domain  of $L^{k/2}$.
It is a Banach space,
equipped with the graph norm $\|f\|_{k}=\|f\|_{\mathbf{H}} +\|L^{k/2}f\|_{\mathbf{H}} $.
The next theorem contains generalizations of several results
from  classical harmonic analysis (in particular  the
Paley-Wiener theorem). It follows from our  results in
\cite{Pes00}.
\begin{theorem}\label{PWproprties}
The following statements hold:
\begin{enumerate}
\item (Bernstein inequality)   $f \in \PWoL$ if and only if
$ f \in \mathbf{H}^{\infty}=\bigcap_{k=1}^{\infty}\mathbf{H}^{k}$,
and the following Bernstein inequalities  holds true
\begin{equation}\label{Bern0}
\|L^{s/2}f\|_{\mathbf{H}} \leq \omega^{s}\|f\|_{\mathbf{H}}  \quad \mbox{for all} \, \,  s\in \mathbb{R}_{+};
\end{equation}
 \item  (Paley-Wiener theorem) $f \in \PWoL$
  if and only if for every $g\in\mathbf{H}$ the scalar-valued function of the real variable  $ t \mapsto
\langle e^{it\sqrt{L}}f,g \rangle $
 is bounded on the real line and has an extension to the complex
plane as an entire function of the exponential type $\omega$;
\item (Riesz-Boas interpolation formula) $f \in \PWoL$ if
and only if  $ f \in \mathbf{H}^{\infty}$ and the
following Riesz-Boas interpolation formula holds for all $\omega > 0$:
\begin{equation}  \label{Rieszn}
i\sqrt{L}f=\frac{\omega}{\pi^{2}}\sum_{k\in\mathbb{Z}}\frac{(-1)^{k-1}}{(k-1/2)^{2}}
e^{i\left(\frac{\pi}{\omega}(k-1/2)\right)\sqrt{L}}f.
\end{equation}
\end{enumerate}
\end{theorem}
\begin{proof}
(1) follows immediately from the definition and representation (\ref{FT}).  To prove (2) it is sufficient to apply the classical Bernstein inequality \cite{N}  in the  uniform norm on $\mathbb{R}$ to every  function $\langle e^{it\sqrt{L}}f,g\rangle,\>\>g\in\mathbf{H}$. To prove   (3) one has to apply the classical Riesz-Boas interpolation formula on $\mathbb{R}$, \cite{Pes15a}, \cite{N} to a function
$\langle e^{it\sqrt{L}}f,g\rangle$.
\end{proof}

\subsection{Frames in Hilbert spaces}

A family of vectors $\{\theta_{v}\}$  in a Hilbert space $\mathbf{H}$ is called a frame if there exist constants
$A, B>0$ such that
\begin{equation}
A\|f\|^{2}_{\mathbf{H}}\leq \sum_{v}\left|\left<f,\theta_{v}\right>\right|^{2}    \leq B\|f\|_{\mathbf{H}} ^{2} \quad \mbox{for all} \quad f\in \mathbf{H}.
\end{equation}
The largest $A$ and smallest $B$ are called lower and upper frame bounds.

The family of scalars $\{\left<f,\theta_{v}\right>\}$
represents a set of measurements of a vector $f$.
In order to resynthesize the vector $f$
from this collection  of measurements in a linear way
one has to find another
(dual) frame $\{\Theta_{v}\}$.
Then a reconstruction formula is
$
f=\sum_{v}\left<f,\theta_{v}\right>\Theta_{v}.
$
Dual frames are not unique in general.
Moreover it may be difficult to find a dual frame in concrete
situations.  
If $A=B=1$ the frame is said to be  tight   or Parseval.
Parseval frames are similar in many respects to orthonormal wavelet bases.  For example, if in addition all vectors $\theta_{v}$ are unit vectors, then the frame is an  orthonormal basis.
The main feature of Parseval frames is that
decomposing
and synthesizing a vector from known data are tasks carried out with
the same family of functions, i.e., the Parseval frame is its own dual frame.

\subsection{Sampling in abstract Paley-Wiener spaces}

We now assume that  the {\bf Assumption 3} is satisfied.  It meant that there exists a  $C>0$ and  $m_{0}\geq 0$ such that for any $0<\rho<1$ there exists a        set of functionals  $\mathcal{A}^{(\rho)}=\left\{\mathcal{A}_{ k}^{(\rho)}\right\},$  defined on $\mathbf{H}^{m_{0}}$, for which the inequalties (\ref{A}) hold true.

\begin{remark}
Following \cite{Pes01}, \cite{Pes04b} we call inequality (\ref{A}) a  {\it Poincar\'{e}-type inequality} since it is an estimate of the norm  of 
$f$ through the norm  of its ``derivative'' $L^{m/2}f$.
\end{remark}

 Let us introduce vectors  $\mu_{ k}\in\mathbf{H}$ such that
$
 \left<f,\mu_{ k}\right>=\mathcal{A}_{k}^{(\rho)}(f),\>\>f\in \mathbf{H}^{m},\>\>m> m_{0}.
$ Let  $\mathcal{P}_{\negthinspace \omega}$
 be the orthogonal projection of $\mathbf{H}$ onto
 $\PWoL$ and put
 \begin{equation}\label{functionals-phi}
 \phi^{\omega}_{ k}=\mathcal{P}_{\negthinspace \omega}\mu_{ k}.
\end{equation}
 Using  the Bernstein inequality (\ref{Bern0}) we obtain  the following statement.

 \begin{theorem}\label{PWSs}(Sampling Theorem)
Assume  that inequality  (\ref{A}) holds and for a given  $\omega>0$ and $\delta \in (0,1)$  pick a $\rho$ such that 
 \begin{equation}\label{rho}
\rho^{2m}=C^{-1}\omega^{-2m}\delta.
\end{equation}
Then  the family of vectors
$\{\phi^{\omega}_{k}\}$ is a frame for the Hilbert space $\PWoL$ and
\begin{equation}\label{frame-in-PW}
(1-\delta) \|f\|^{2}_{\mathbf{H}}\leq \sum_{k}\left|\left<f,\phi^{\omega}_{k}\right>\right|^{2}     \leq \|f\|^{2}_{\mathbf{H}},\>\>\>f\in \PWoL.
\end{equation}
The canonical dual frame $\{\Theta^{\omega}_{k}\}$  has the property   $\Theta^{\omega}_{k}\in  \PWoL$ and
provides the following reconstruction formulas
\begin{equation}
f=\sum_{k}\left<f,\phi^{\omega}_{k}\right>\Theta^{\omega}_{k}=\sum_{k}\left<f,\Theta^{\omega}_{k}\right>\phi^{\omega}_{k},\>\>\>f\in \PWoL.
\end{equation}
\end{theorem}

\subsection{Partitions of unity on the frequency side}

\label{subsect:part_unity_freq}

The construction of frequency-localized frames is  achieved via spectral calculus. The idea is to start from a partition of unity on the positive real axis. In the following, we will be considering two different types of such partitions, whose construction we now describe in some detail.

 Let $g\in C^{\infty}(\mathbb{R}_{+})$ be a non-increasing
 function such that $supp(g)\subset [0,\>  2], $ and $g(\lambda)=1$ for $\lambda\in [0,\>1], \>0\leq g(\lambda)\leq 1, \>\lambda>0.$
 We now let
$
 h(\lambda) = g(\lambda) - g(2 \lambda)~,
$ which entails $supp(h) \subset [2^{-1},2]$,
and use this to define
$
 F_0(\lambda) = \sqrt{g(\lambda)}~, F_j(\lambda) = \sqrt{h(2^{-j} \lambda)}~, j \ge 1~,
$
as well as $
 G_j(\lambda) = \left[F_j(\lambda)\right]^2=F_j^2(\lambda)~, j \ge 0~.$
As a result of the definitions, we get for all $\lambda \ge 0$ the equations
$
\sum_{j = 0}^n G_j(\lambda) = \sum_{j = 0}^n F_j^2(\lambda)
=  g(2^{-n} \lambda),
$
and as a consequence
$
\sum_{j \ge 0} G_j(\lambda) = \sum_{j \ge 0} F_j^2(\lambda) =  1~,\>\>\>\lambda\geq 0,
$
 with finitely many nonzero terms occurring in the sums for each
 fixed $\lambda$. 
We call the sequence $(G_j)_{j \ge 0}$ a {\bf (dyadic) partition of unity}, and $(F_j)_{j \ge 0}$ a {\bf quadratic (dyadic) partition of unity}.  As will become soon apparent, quadratic partitions are useful for the construction of frames.
 Using the spectral theorem one has
$
F_{j}^{2}\SLB  f=\mathcal{F}^{-1}\left(F_{j}^{2}(\lambda)\mathcal{F}f(\lambda)\right),\>\>\>j\geq 1,
$
and thus
\begin{equation} \label{eqn:quad_part_identity}
 f = \mathcal{F}^{-1}\mathcal{F}f(\lambda) =\mathcal{F}^{-1}\left(\sum_{j\geq 0}F_{j}^{2}(\lambda)\mathcal{F}f(\lambda)\right) = \sum_{j\geq 0} F_{j}^2\SLB f
\end{equation}  
Taking inner product with $f$ gives
$
\|F_{j}\SLB f\|^{2}_{\mathbf{H}}=\langle F_{j}^{2}\SLB f, f \rangle
$  
and
$
\|f\|_{\mathbf{H}}^2=\sum_{j\geq 0}\langle F_{j}^2 \SLB f,f\rangle=\sum_{j\geq 0}\|F_{j}\SLB f\|_{\mathbf{H}}^2 .
$
Similarly, we get the identity  $
 \sum_{j \geq 0} G_j \SLB f = f~.$
Moreover, since the functions $G_j,  F_{j}$, have their supports in  $
[2^{j-1},\>\>2^{j+1}]$, the elements $ F_{j} \SLB f $ and $G_j \SLB f$
 are bandlimited to  $[2^{j-1},\>\>2^{j+1}]$, whenever $j \ge 1$, and to $[0,2]$ for $j=0$.

 \subsection{Paley-Wiener frames in  Hilbert spaces}\label{Hilb}

Using the notation from above and Theorem \ref{PWSs}, one can describe  the following Paley-Wiener frame in an abstract Hilbert space  $\mathbf{H}$.

\begin{theorem}\label{frameH}(Paley-Wiener nearly Parseval frame in $\mathbf{H}$)

For a fixed $\delta\in (0,1)$ and $j\in \mathbb{N}$ let $\{\phi^{j}_{k}\}$ be a set of vectors described in Theorem \ref{PWSs} that correspond to $\omega=2^{j+1}$. 
Then for functions $F_{j}$    the family of Paley-Wiener  vectors
$
\Phi^{j}_{k}= F_{j} \SLB \phi^{j}_{k}
$
has the following properties:
\begin{enumerate}
\item Each vector $\Phi^{j}_{k}$ belongs  to  $\bPW_{[2^{j-1},\>2^{j+1}]}(\sqrt{L}) ,\>\> j \in   N, \>k=1,... .$
\item   The family $\left\{\Phi^{j}_{k}\right\}$ is  a frame in  $\mathbf{H}$ with constants $1-\delta$ and $1$:
\begin{equation}
(1-\delta)\|f\|_{\mathbf{H}}^2\leq \sum_{j\geq 0}\sum_{k}\left|\left< f, \Phi^{j}_{k}\right>\right|^{2}\leq \|f\|_{\mathbf{H}}^2,\>\>\>f \in\mathbf{H}.
\end{equation}
\item  The canonical dual frame $\{\Psi^{j}_{k}\}$
also consists of bandlimited  vectors $\Psi^{j}_{k}\in \bPW_{[2^{j-1},\>2^{j+1}]}\SLB ,\>\>j\in  [0,\>\infty), \>k=1,...,$ and has frame bounds $A=1,\>\>B=(1-\delta)^{-1}.$ 

\item The reconstruction formulas hold for every $f\in \mathbf{H}$

$
f=\sum_{j}\sum_{k}\left<f,\Phi^{j}_{k}\right>\Psi^{j}_{k}=\sum_{j}\sum_{k}\left<f,\Psi^{j}_{k}\right>\Phi^{j}_{k}.
$

\end{enumerate}
\end{theorem}

The last two items here follow from the first two and general properties  of frames.
We also note that  for reconstruction of a Paley-Wiener vector  from  a set of samples one can use, besides dual frames, the  variational (polyharmonic) splines in Hilbert spaces developed in   \cite{Pes01}.

\section{Besov subspaces in Hilbert spaces}\label{Bes}

\subsection{Approximation spaces}\label{Appr}

Let us introduce another functional on $\EB+\FB$,
where $\EB$ and $\FB$ form a pair of quasi-normed linear spaces
$
\mathcal{E}(f, t)=\mathcal{E}(f, t, \mathbf{E},  \mathbf{F})=\inf_{g\in \FB,
\|g\|_{\FB}\leq t}\|f-g\|_{\EB}.
$
\begin{definition}
The approximation space $\mathcal{E}_{\alpha,q}(\EB, \FB),
0<\alpha<\infty, 0<q\leq \infty $ is the quasi-normed linear spaces
of all $f\in \EB+\FB$ for which the quasi-norm
\begin{equation}
\| f \|_{\mathcal{E}_{\alpha,q}(\EB, \FB)} = \left(\int_{0}^{\infty}\left(t^{\alpha}\mathcal{E}(f,
t)\right)^{q}\frac{dt}{t}\right)^{1/q} 
\end{equation}
is finite. 
\end{definition}

The next theorem   represents a very abstract version of what is  known as an Equivalence Approximation Theorem \cite{PS}, \cite{BS}. In the form it is stated below it was proved in \cite{KP}.

\begin{theorem}\label{equivalence-interpolation}
 Suppose that $\mathcal{T}\subset \FB \subset \EB$ are quasi-normed
linear spaces and $\EB$ and $\FB$ are complete.
If there exist $C>0$ and $\beta >0$ such that
the following Jackson-type inequality is satisfied 
$
t^{\beta}\mathcal{E}(t,f,\mathcal{T},\EB)\leq C\|f\|_{\FB},
\>\>t>0, \>\> f \in \FB,$
 then the following embedding holds true
\begin{equation}\label{imbd-1}
(\EB,\FB)^{K}_{\theta,q}\subset
\mathcal{E}_{\theta\beta,q}(\EB, \mathcal{T}), \quad \>0<\theta<1, \>0<q\leq \infty.
\end{equation}
If there exist $C>0$ and $\beta>0$
such that 
the following Bernstein-type inequality holds
$
\|f\|_{\FB}\leq C\|f\|^{\beta}_{\mathcal{T}}\|f\|_{\EB}
,\>\> f\in \mathcal{T},
$
then the following embedding holds true
\begin{equation}\label{imbd-2}
\mathcal{E}_{\theta\beta, q}(\EB, \mathcal{T})\subset
(\EB, \FB)^{K}_{\theta, q}  , \quad 0<\theta<1, \>0<q\leq \infty.
\end{equation}
\end{theorem}\label{intthm}

\subsection{Besov subspaces in  Hilbert spaces}\label{AbstractBesov}

According to (\ref{interp}) we introduce $
\mathcal{B}_{\mathbf{H},q}^\alpha \SLB =( \mathbf{H},  \mathbf{H}^{r})^{K}_{\theta, q},\>\>\> 0<\theta=\alpha/r<1,\>\>\>
1\leq q\leq \infty.
$
We also introduce a notion of best approximation: 
$$
\mathcal{E}(f,\omega)=\inf_{g\in
\PWoL}\|f-g\|_{\mathbf{H}}.\label{BA1}
$$
Our goal is to apply Theorem \ref{equivalence-interpolation} in the situation where $\mathbf{E}=\mathbf{H}$,  $\>\mathbf{F}=\mathbf{H}^{r}$ and $\mathcal{T}=\PWoL $ is a natural abelian group as the additive group of a vector space, with the quasi-norm
$
 \| f \|_{\mathcal{T}} = \inf \left \{ \omega'>0~: f \in \mathbf{PW}_{\mathbf{\omega}'}\left(\sqrt{L}\right) \right\}~.
$
To be more precise it is the space
of finite sequences of Fourier coefficients $\mathbf{c}=(c_{1},...c_{m})\in \PWoL$
 where $m$ is the greatest index such that the eigenvalue $\lambda_{m}\leq \omega$.
 For a $\mathbf{c}=(c_{1},...c_{m})\in \PWoL$ the quasi-norm is defined as  $\>\>\>
 \|\mathbf{c}\|_{E_{\omega}(L)} =\max\left\{\sqrt{\lambda_{j}}: c_{j}\neq 0, \>\>c_{j+1}=...=c_{m}=0\right\}.
 $
\begin{remark}
Let us emphasize  that the reason we need the language of  quasi-normed spaces is because $\| \cdot \|_{\mathcal{T}} $ is clearly not a norm, only a quasi-norm on $ \PWoL$. 
\end{remark}
The Plancherel Theorem allows us to verify a  generalization of the Bernstein inequality for bandlimited functions in $ f \in \PWoL$. One can prove the following statement (see \cite{Pes3}, \cite{Pes00}).

\begin{lemma} 
A vector $f$ belongs to the space $ \PWoL$ if and only if the following Bernstein inequality holds
$
\|L^{r/2}f\|_{\mathbf{H}}\leq \omega ^{r}\|f\|_{\mathbf{H}},\>\>\>r\in \mathbb{R}_{+}.
$
\end{lemma}
One also has an analogue of the Jackson inequality (see  \cite{Pes3}, \cite{Pes00})
$
\mathcal{E}(f,\omega)\leq \omega^{-r}\|f\|_{\mathbf{H}^{r}},\>\>\>f\in \mathbf{H}^{r}.
$
These two inequalities and Theorem \ref{equivalence-interpolation}  imply the following result (compare to  \cite{Pes8}, \cite{Pes11}).

\begin{theorem} \label{approx}
For $\alpha>0, 1\leq q\leq\infty$ the norm of
 $\mathcal{B}_{\mathbf{H},q}^{\alpha}\SLB$,
  is equivalent to
\begin{equation}
\|f\|_{\mathbf{H}}+\left(\sum_{j=0}^{\infty}\left(2^{j\alpha }\mathcal{E}(f,
2^{j})\right)^{q}\right)^{1/q}.
\end{equation}
\label{maintheorem1}
\end{theorem}

Let the functions $F_{j}$ be as in Subsection \ref{subsect:part_unity_freq}. 

\begin{theorem}\label{projections}
For $\alpha>0, 1\leq q\leq\infty$ the norm of
 $\mathcal{B}_{\mathbf{H},q}^{\alpha}\SLB$,
  is equivalent to

\begin{equation}
f \mapsto \left(\sum_{j=0}^{\infty}\left(2^{j\alpha
}\left \|F_j\SLB f\right \|_{\mathbf{H}}\right)^{q}\right)^{1/q},
\label{normequiv-1}
\end{equation}
  with the standard modifications for $q=\infty$.
\end{theorem}

\begin{proof}

We obviously have
$
\mathcal{E}(f, 2^{l})\leq \sum_{j> l} \left \|F_j \SLB f\right \|_{\mathbf{H}}.
$
By using a discrete version of Hardy's inequality \cite{BB} we obtain the estimate
\begin{equation} \label{direct}
\|f\|+\left(\sum_{l=0}^{\infty}\left(2^{l\alpha }\mathcal{E}(f,
2^{l})\right)^{q}\right)^{1/q}\leq C \left(\sum_{j=0}^{\infty}\left(2^{j\alpha
}\left \|F_j \SLB f\right \|_{\mathbf{H}}\right)^{q}\right)^{1/q}.
\end{equation}
Conversely,
 for any $g\in \bPW_{2^{j-1}} \SLB$ we have
$
\left\|F_j\SLB f\right\|_{\mathbf{H}}=\left\|F_{j}\SLB (f-g)\right\|_{\mathbf{H}}\leq \|f-g\|_{\mathbf{H}}.
$
This implies the estimate
$
\left\|F_j\SLB f\right\|_{\mathbf{H}}\leq \mathcal{E}(f,\>2^{j-1}),
$
which shows that the inequality opposite to (\ref{direct}) holds.
 The proof is complete.
\end{proof}

 \begin{theorem}\label{framecoef}
For $\alpha>0, 1\leq q\leq\infty$ the norm of
 $\mathcal{B}_{\mathbf{H},q}^{\alpha}\SLB$
  is equivalent to
\begin{equation}
 \left(\sum_{j=0}^{\infty}2^{j\alpha q }
\left(\sum_{k}\left|\left<f,\Phi^{j}_{k}\right>\right|^{2}\right)^{q/2}\right)^{1/q}\asymp \|f\|_{B_{q}^{\alpha}},
\label{normequiv}
\end{equation}
  with the standard modifications for $q=\infty$.
\end{theorem}
\begin{proof}
For   $f\in \mathbf{H}$  and operator $F_{j}\SLB$ 
we apply (\ref{frame-in-PW}) to $F_{j}\SLB f\in \bPW_{2^{j+1}}\SLB$ to obtain
\begin{equation}
(1-\delta)\left \|F_j\SLB f\right \|_{\mathbf{H}}^{2}\leq
\sum_k\left|\left< F_{j}\SLB f, \phi^{j}_{k}\right>\right|^{2}\leq
\left \|F_j\SLB f\right \|_{\mathbf{H}}^{2}.
\end{equation}
 Since $\Phi^{j}_{k}=F_{j}\SLB \phi^{j}_{k}$
 we obtain the following inequality
$$
\sum_{k}\left|\left<f,\Phi^{j}_{k}\right>\right|^{2}\leq \left \|F_j\SLB f\right \|_{\mathbf{H}}^{2}\leq \frac{1}{1-\delta}
\sum_{k}\left|\left<f,\Phi^{j}_{k}\right>\right|^{2}
\quad \mbox{for all} \, \, \, f\in \mathbf{H}.
$$
Our statement follows now from Theorem \ref{projections}.
\end{proof}

============================

==============================

\section{Applications}

\subsection{ Analysis on  $\mathbb{S}^{d}$}

We will specify the general setup in the case of standard unit sphere. 
Let 
$
\mathbb{S}^{d}=\left\{x\in \mathbb{R}^{d+1}: \|x\|=1\right\}.
$
Let $\mathcal{P}_{n}$ denote the space of spherical harmonics of degree $n$, which are restrictions to $\mathbb{S}^{d}$ of harmonic homogeneous polynomials of degree $n$ in $\mathbb{R}^{d}$. The Laplace-Beltrami operator $\Delta_{\mathbb{S}^{d}}$ on $\mathbb{S}^{d}$  is a restriction of the regular Laplace operator $\Delta$ in $\mathbb{R}^{d}$. Namely,
 $
\Delta_{\mathbb{S}^{d}}f(x)=\Delta \widetilde{f}(x),\>\>x\in \mathbb{S}^{d},
$
where $\widetilde{f}(x)$ is the homogeneous extension of $f$: $\>\>\widetilde{f}(x)=f\left(x/\|x\|\right)$. Another way to compute $\Delta_{\mathbb{S}^{d}}f(x)$ is to express both $\Delta_{\mathbb{S}^{d}}$ and $f$ in a spherical coordinate system.
Each $\mathcal{P}_{n}$ is the eigenspace of  $\Delta_{\mathbb{S}^{d}}$ that corresponds to the eigenvalue $-n(n+d-1)$. Let $Y_{n,l},\>\>l=1,...,l_{n}$ be an orthonormal basis in $\mathcal{P}_{n}$.

Let $e_{1},...,e_{d+1}$ be the standard orthonormal basis in $\mathbb{R}^{d+1}$.  If $SO(d+1)$ and $SO(d)$ are the groups of rotations of $\mathbb{R}^{d+1}$ and  $\mathbb{R}^{d}$ respectively then $\mathbb{S}^{d}=SO(d+1)/SO(d)$. 
On $\mathbb{S}^{d}$ we consider vector fields
$
X_{i,j}=x_{j}\partial_{x_{i}}-x_{i}\partial_{x_{j}}
$
which are generators of one-parameter groups of rotations   $\exp tX_{i,j}\in SO(d+1)$ in the plane $(x_{i}, x_{j})$. These groups are defined by the formulas for $\tau\in \mathbb{R}$, 
$$
\exp \tau X_{i,j}\cdot (x_{1},...,x_{d+1})=(x_{1},...,x_{i}\cos \tau -x_{j}\sin \tau ,..., x_{i}\sin \tau +x_{j}\cos \tau ,..., x_{d+1})
$$
Let $T_{i,j}(\tau)$ be a one-parameter group which is a representation of $\exp \tau X_{i,j}$ in the space $L_{p}(\mathbb{S}^{d})$. It acts on $f\in L_{p}(\mathbb{S}^{d})$ by the following formula
$$
T_{i,j}(\tau)f(x_{1},...,x_{d+1})=f(x_{1},...,x_{i}\cos \tau -x_{j}\sin \tau ,..., x_{i}\sin \tau +x_{j}\cos \tau ,..., x_{d+1}).
$$
Let $D_{i,j}$ be a generator of $T_{i,j}$ in $L_{p}(\mathbb{S}^{d})$.  The Laplace-Beltrami operator $\Delta_{\mathbb{S}^{d}}$   can be identified with the operator $L=  \sum_{i<j}D_{i,j}^{2}.$   One can easily illustrate our results by describing  norms  in Sobolev and Besov spaces on $\mathbb{S}^{d}$ in terms of  operators $T_{i,j}$ and $D_{i,j}$. In this situation role of Paley-Wiener subspaces is played by subspaces $\mathcal{P}_{n}$. A  set of functionals $\left\{  \mathcal{A}_{k}^{(\rho)}  \right\}_{k\in \mathcal{K}_{\rho}}$ described in (\ref{A}), (\ref{rho}) can be represented by a set of Dirac measures at  nodes $\{x_{k}^{(\rho)}\}$ "nearly uniformly" distributed over the sphere $\mathbb{S}^{d}$. 
\subsection{Compact homogeneous manifolds}
It should be noted,  that a similar situation holds on any compact homogeneous manifolds  $M=G/K$ where $G$ is a compact Lie group and $K$ is its closed subgroup. Moreover,  in this case description of Besov spaces in terms of  approximation by Paley-Wiener  vectors (eigenfunctions of a corresponding Laplace-Beltrami operator) and in terms of frame coefficients can be extended to  any $1\leq p\leq \infty$ \cite{gp}, \cite{Pes14a}, \cite{FFP}.  
A  set of functionals $\left\{  \mathcal{A}_{k}^{(\rho)}  \right\}_{k\in \mathcal{K}_{\rho}}$ can be represented by a set of Dirac measures  (or some other functionals \cite{Pes04b}) at  a set of nodes $\{x_{k}^{(\rho)}\}$  "nearly uniformly" distributed over the manifold $M$ with the spacing comparable to $\rho>0$. The Weyl's asymptotic formula \cite{Hor} implies  \cite{Pes04a}  that a  rate of sampling which is given by (\ref{rho}) is essentially optimal.

\subsection{Non-compact symmetric spaces}

Our framework also holds on non-compact symmetric spaces  \cite{Pes08}-\cite{Pes13b}. Besov spaces  can be characterized either using corresponding modulus of continuity \cite{Pes09c} or by approximation by Paley-Wiener vectors \cite{Pes09a} or in terms of frames \cite{Pes13b}.  In this situation Paley-Wiener functions which admit explicit description  in terms of the Helgason-Fourier transform \cite{Pes09a}  can be represented by a set of Dirac measures (or even more general functionals \cite{Pes04b}) at  a set of nodes $\{x_{k}^{(\rho)}\}$   "nearly uniformly" distributed over the manifold $M$ with the spacing comparable to $\rho>0$.

\begin{remark}
It should be noted that in the case of non-compact symmetric spaces our approach leads to Sobolev and Besov spaces which are different from the conventional ones generated by the Laplace-Beltrami operator associated with the natural metric. 
\end{remark}
\bigskip

\bibliographystyle{amsalpha}

\end{document}